\documentclass[12pt, oneside]{article}
\usepackage{amsmath,amssymb,amsfonts}
\usepackage{amsthm}
\usepackage{graphicx}
\usepackage{float}
\usepackage[hidelinks]{hyperref}
\usepackage{geometry}
\usepackage{caption}
\usepackage{color}
\usepackage{fancyhdr}
\usepackage{graphicx}
\usepackage{color}
\usepackage{indentfirst}
\usepackage[T1]{fontenc}
\usepackage{enumerate}
\usepackage[caption=false,subrefformat=parens]{subfig}
\usepackage{multirow}
\usepackage{makecell}
\usepackage{booktabs}
\usepackage[section]{placeins}

\newtheorem{theorem}{Theorem}

\newtheorem{proposition}{Proposition}

\newtheorem{lemma}{Lemma}
\newtheorem{remark}{Remark}

\begin{document}

\title{Resolving the Gibbs Phenomenon in Fractional Fourier Series via Inverse Polynomial Reconstruction}
\author{Faiza Afzal\thanks{School of Mathematics, China University of Mining and Technology, Xuzhou, 221116, China; Department of Mathematics, Riphah International University Faisalabad, 38000, Pakistan.}, and Xu Xiao\thanks{Corresponding author. School of Mathematics and Statistics, Guangxi Normal University, Guilin, Guangxi 541004, China.}}

\maketitle

\begin{abstract}
The fractional Fourier series generalizes the classical Fourier series by introducing a rotation angle $\alpha$ in the time-frequency plane, but inherits the Gibbs phenomenon for piecewise smooth functions. Unlike the classical setting, the chirp modulation factor renders the fractional partial sum complex-valued, corrupting both real and imaginary components simultaneously and making direct adaptation of classical remedies insufficient. The Inverse Polynomial Reconstruction Method (IPRM) resolves the Gibbs phenomenon by enforcing that the Fourier coefficients of a Gegenbauer polynomial expansion match the given spectral data, rather than projecting the corrupted partial sum onto a polynomial basis. This paper extends the IPRM to fractional Fourier series for the first time. The fractional transformation matrix is derived and its conditioning is shown to be governed by an $\alpha$-independent Gram matrix, which reveals the dependence on the Gegenbauer parameter $\lambda$ and the polynomial degree $m$, while being entirely insensitive to the transform angle. An $L^{\infty}$ error estimate is established, guaranteeing exponential convergence for analytic functions. Numerical experiments on piecewise analytic test functions demonstrate complete elimination of the Gibbs phenomenon and 
confirm the theoretical predictions.\\
~\\
\textbf{Keywords: }fractional Fourier series; Gibbs phenomenon; inverse polynomial reconstruction method; Gegenbauer polynomials; exponential convergence.
\end{abstract}

\section{Introduction}\label{sec:introduction}

The Fourier series is a foundational tool in signal processing and computational physics, yet it suffers from a fundamental drawback when applied to discontinuous or non-periodic functions: the Gibbs phenomenon \cite{wilbraham1848}. This effect induces persistent overshoots and oscillations near discontinuities, effectively limiting the maximum norm convergence rate to $\mathcal{O}(1)$ even for otherwise piecewise smooth functions \cite{gottlieb1997}. Consequently, these persistent oscillations introduce substantial difficulties in critical applications where precise edge is paramount. In medical image processing, for example, Gibbs ringing contaminates the boundaries of reconstructed tissues in magnetic resonance imaging (MRI), potentially leading to misdiagnosis \cite{archibald2002,dai2025}. Similarly, the effective resolution of these artifacts remains a highly active challenge in numerical partial differential equations (PDEs). Recent work on high-order shock-capturing schemes and entropy-based spectral stabilization demonstrates that Gibbs-type artifacts remain an active challenge in computational fluid dynamics \cite{liu2024,kolluru2024}.

Extensive research has been devoted to addressing the Gibbs phenomenon in classical Fourier analysis. A natural approach is to apply post-processing techniques to the corrupted partial sum. Filtering methods \cite{tadmor2002} attenuate high-frequency 
modes to suppress oscillations, but inevitably reduce resolution near smooth regions. A closely related strategy is mollification, which convolves the partial sum with a smooth kernel; Megha and Chandhini \cite{megha2025} recently proposed a 
Gegenbauer polynomial based mollifier and established pointwise spectral error bounds for this approach. A more fundamental strategy is the Gegenbauer reconstruction method \cite{gottlieb1997,gottlieb1992}, which achieves exponential convergence by re-projecting the corrupted partial sum onto an orthogonal polynomial 
basis. However, this direct approach imposes strict proportionality constraints on the polynomial degree $m$ and the Gegenbauer parameter $\lambda$ relative to the Fourier truncation order $N$ \cite{boyd2005,min2006}; failure to satisfy these 
constraints causes the reconstruction to converge to the corrupted Fourier sum rather than the true function.

To overcome these limitations, the Inverse Polynomial Reconstruction Method (IPRM) was developed \cite{shizgal2003,jung2004}. Rather than projecting the corrupted partial sum, IPRM reconstructs the function by enforcing that the Fourier coefficients of the polynomial expansion exactly match the given spectral data, thereby orthogonalizing the error with respect to the Fourier space and achieving exponential convergence without stringent parameter tuning. The method has been successfully extended to multi-dimensional scenarios and alternative spectral bases 
\cite{jung2005,adcock2012}. Recent work has further broadened the scope of spectral reconstruction methods: Li and Gelb \cite{li2025} introduced a Bayesian framework for spectral reprojection that improves robustness with respect to the Gegenbauer 
parameters and enables uncertainty quantification from noisy Fourier data, while Song et al. \cite{song2026} developed a hybrid method that combines filtering with stable extrapolation to reconstruct piecewise smooth functions from non-uniform Fourier data. Nevertheless, despite these advances, the application of IPRM to 
fractional Fourier series has not yet been explored, representing a notable gap in the literature.

Meanwhile, the fractional Fourier transform (FrFT) has emerged as a powerful generalization of classical Fourier analysis. By rotating the time-frequency plane by a fractional angle $\alpha$, the FrFT provides a unified representation particularly well-suited to chirp-type signals and non-stationary processes \cite{ozaktas2001,tao2009}. Its applications span optical signal processing \cite{niewelt2023}, synthetic aperture radar imaging \cite{sun2002}, and discrete multi-angle spectral computation \cite{huang2024}. However, much like its classical counterpart, the fractional Fourier series inevitably inherits the Gibbs phenomenon near discontinuities \cite{pei1999,zhu2014}. A fundamental complication, absent in the classical setting, is that the chirp modulation factor $e^{-\mathrm{i}\frac{1}{2}x^{2}\cot\alpha}$ \cite{ozaktas2001,almeida2002} causes Gibbs oscillations to corrupt both the real and imaginary components of the fractional partial sum simultaneously, rendering direct adaptation of classical remedies insufficient. To address this critical gap, this paper presents the first extension of the IPRM to one-dimensional fractional Fourier series. By adapting the IPRM framework to the fractional domain, we enable high-accuracy reconstruction of discontinuous signals within time-frequency adaptive representations. Specifically, our primary contributions are threefold:

\begin{enumerate}
\item We formulate the IPRM framework for fractional Fourier series, rigorously deriving the transformation matrix that maps fractional Fourier coefficients to Gegenbauer polynomial expansion coefficients.

\item We analyze the condition number of the associated linear system by establishing that it is governed by an $\alpha$-independent Gram matrix, and derive an $L^{\infty}$ error estimate that guarantees exponential convergence for analytic 
functions.

\item Through extensive experiments across various test functions, we demonstrate the method's effectiveness in completely mitigating the Gibbs phenomenon for discontinuous and piecewise-smooth functions. Furthermore, we show that our approach yields superior accuracy and robustness compared to both direct fractional Fourier approximation and the fractional Gegenbauer reconstruction method.
\end{enumerate}

The remainder of this paper is organized as follows. Section \ref{sec:preliminaries} reviews the necessary preliminaries, including the fractional Fourier series, Gegenbauer polynomials, and the classical IPRM framework. Section \ref{sec:IPRMFFrS} presents the IPRM formulation for fractional Fourier series, derives the associated transformation matrix, analyzes the conditioning of the coefficient matrix, and 
establishes an error estimate for the proposed method. Section \ref{sec:numerical} reports numerical experiments validating the theoretical results and provides a comparative analysis against existing methods. Finally, Section \ref{sec:conclusion} provides concluding remarks.

\section{Preliminaries}\label{sec:preliminaries}

\subsection{Fractional Fourier Series}\label{subsec:frft_series}

The fractional Fourier transform (FrFT) generalizes the classical Fourier transform by introducing a fractional rotation angle $\alpha$ in the time-frequency plane \cite{almeida2002,namias1980}. This section reviews the fundamental formulation of the one-dimensional fractional Fourier series and its associated Gibbs phenomenon. Given a function $f(x)$ defined on $[-\pi, \pi]$, its fractional Fourier series is defined as
\begin{equation}\label{Eq:FrF_series}
f(x) = \sum_{k=-\infty}^{\infty} c_{k,\alpha} \phi_{k,\alpha}(x),
\end{equation}
where the fractional Fourier basis functions are
\begin{equation}\label{Eq:FrF_basis}
\phi_{k,\alpha}(x) = e^{-i\frac{1}{2} x^2 \cot \alpha} e^{ikx}, \quad 0 < \alpha < \frac{\pi}{2},
\end{equation}
and the corresponding coefficients are given by
\[
c_{k,\alpha} = \frac{1}{2\pi} \int_{-\pi}^{\pi} f(x) \overline{\phi_{k,\alpha}(x)} \, dx.
\]
In practical computations, retaining only a finite number of terms in \eqref{Eq:FrF_series} yields the $N$-th partial sum
\begin{equation}\label{Eq:FrF_approx}
f_{N,\alpha}(x) = \sum_{k=-N}^{N} c_{k,\alpha} \phi_{k,\alpha}(x).
\end{equation}
While $f_{N,\alpha}(x)$ converges uniformly when $f(x)$ is globally smooth, jump discontinuities give rise to the Gibbs phenomenon \cite{pei1999,zhu2014}, limiting the convergence rate in the maximum norm to $\mathcal{O}(1)$ for piecewise smooth functions \cite{gottlieb1997}.

Unlike the classical Fourier partial sum, $f_{N,\alpha}(x)$ is complex-valued for $0 < \alpha < \pi/2$, since the chirp modulation factor $e^{-\mathrm{i}\frac{1}{2}x^2\cot\alpha}$ renders each basis function complex-valued with coupled real and imaginary components. As a consequence, a jump discontinuity in $f(x)$ simultaneously corrupts both $\operatorname{Re}[f_{N,\alpha}(x)]$ and $\operatorname{Im}[f_{N,\alpha}(x)]$, a complication absent in the classical setting $\alpha = \pi/2$. The IPRM framework developed in Section~\ref{sec:IPRMFFrS} is therefore formulated to reconstruct both components jointly from the fractional spectral data.

\subsection{Gegenbauer Polynomials}\label{subsec:gegenbauer}

As noted above, resolving the Gibbs phenomenon in $f_{N,\alpha}(x)$ requires a reconstruction basis that is free from periodic boundary constraints and admits exponential convergence for piecewise analytic functions. To this end, we apply the linear change of variables $x \mapsto x/\pi$, which maps the interval $[-\pi,\pi]$ to $[-1,1]$, and adopt Gegenbauer polynomials as the reconstruction basis on $[-1,1]$ throughout the remainder of this work.

The Gegenbauer polynomials, denoted as $C_l^{\lambda}(x)$, are a family of orthogonal polynomials defined on the interval $[-1, 1]$ with respect to the weight function $w(x) = (1-x^{2})^{\lambda-1/2}$ for $\lambda > -\frac{1}{2},\lambda\not=0$ \cite{gottlieb1997,gottlieb1992}. The orthogonality property is given by:
\[
\int_{-1}^{1}(1-x^{2})^{\lambda-\frac{1}{2}}C_k^{\lambda}(x)C_l^{\lambda}(x)\,dx=\delta_{kl}h_l^{\lambda},
\]
where $\delta_{kl}$ is the Kronecker delta and the normalization constant $h_l^{\lambda}$ is defined as:
\begin{equation}\label{Eq:h_l_lambda}
h_l^{\lambda}=\sqrt{\pi}C_l^{\lambda}(1)\frac{\Gamma(\lambda+1/2)}{\Gamma(\lambda)(l+\lambda)},
\end{equation}
with the boundary value $C_l^{\lambda}(1) = \frac{\Gamma(l+2\lambda)}{l!\Gamma(2\lambda)}$. In addition to their orthogonality properties, the Gegenbauer polynomials satisfy the three-term recurrence relation \cite{szeg1939}
\begin{equation}\label{Eq:Gegenbauer_three}
(l+1)C_{l+1}^{\lambda}(x)=2(l+\lambda)x\,C_l^{\lambda}(x)-(l+2\lambda-1)C_{l-1}^{\lambda}(x),\quad l\geq 1,
\end{equation}
with $C_0^{\lambda}(x)=1$ and $C_1^{\lambda}(x)=2\lambda x$. This relation provides an efficient means of evaluating $C_l^{\lambda}(x)$ for successive degrees and, as shown in Section \ref{subsec:analytical}, serves as the algebraic foundation for computing the fractional transformation matrix $W_{k,l,\alpha}$ without resort to numerical quadrature. Furthermore, the polynomials satisfy the symmetry relation
$C_l^{\lambda}(-x)=(-1)^l C_l^{\lambda}(x)$. Any function $f \in L^{2}[-1, 1]$ can be expanded in terms of the Gegenbauer polynomials as:
\begin{equation}\label{Eq:Gegenbauer_polynomials}
f(x)=\sum_{l=0}^{\infty}g_l C_l^{\lambda}(x),
\end{equation}
where the spectral expansion coefficients $g_l$ are determined by the inner product over the Gegenbauer space:
\begin{equation}\label{Eq:gl}
g_l=\frac{1}{h_l^{\lambda}}\int_{-1}^{1}(1-x^{2})^{\lambda-\frac{1}{2}}f(x)C_l^{\lambda}(x)dx.
\end{equation}
In practice numerical applications, $f(x)$ is approximated by truncating the 
expansion \eqref{Eq:Gegenbauer_polynomials} at degree $m$,
\begin{equation}\label{Eq:Gegenbauer_approx}
f_{m}(x)=\sum_{l=0}^{m}g_l C_l^{\lambda}(x),	
\end{equation}
For functions analytic on $[-1,1]$, this approximation converges exponentially in $m$ \cite{gottlieb1997}, which is the key property that motivates the use of Gegenbauer polynomials as the reconstruction basis in the IPRM framework.


\subsection{The Direct Gegenbauer Method}\label{subsec:direct}

To establish a comparative baseline for our proposed framework, we briefly review the direct Gegenbauer reconstruction method introduced by Gottlieb et al.\cite{gottlieb1997}. Consider the truncated classical Fourier partial sum of a piecewise smooth function $f(x)$ on the interval $[-1, 1]$, defined as:
\begin{equation}\label{Eq:F_approx}
f_N(x) = \sum_{k=-N}^N \hat{f}_k e^{ik\pi x},
\end{equation}
where $\hat{f}_k$ are the Fourier coefficients
\begin{equation}\label{Eq:F_coeff}
\hat{f}_k=\frac{1}{2}\int_{-1}^1 f(x) e^{-ik\pi x}\,dx.
\end{equation}
The direct method seeks to recover $f(x)$ from the corrupted partial sum $f_N(x)$ by finding the best Gegenbauer polynomial approximation of degree $m$ that is consistent with the available Fourier data. Based on the Gegenbauer expansion \eqref{Eq:Gegenbauer_approx}, the direct method constructs the finite approximation $f_N^m(x)=\sum_{l=0}^m \bar{g}_l^\lambda C_l^\lambda(x)$ by determining the coefficients $\bar{g}_l$ through projecting $f_N(x)$ onto the Gegenbauer polynomial space $\mathcal{G}_m=\mathrm{span}\{C_l^{\lambda}(x) \mid 0 \leq l \leq m\}$, that is, by enforcing
\[
\bar{g}_l = \langle f_N(x),\, C_l^{\lambda}(x)\rangle
= \sum_{k=-N}^{N}\hat{f}_k\,W_{k,l},
\]
where $W_{k,l}$ is the classical transformation matrix element, defined as
\begin{equation}\label{Eq:Wkl}
W_{k,l} = \frac{1}{h_l^{\lambda}}\int_{-1}^{1}(1-x^2)^{\lambda-\frac{1}{2}}e^{\mathrm{i}k\pi x}C_l^{\lambda}(x)\,dx.	
\end{equation}

The coefficients $\bar{g}_l$ are equivalent to those obtained by projecting the original function $f(x)$ first onto $\mathcal{F}_N$ and then onto $\mathcal{G}_m$. Since these two consecutive projections do not commute, the resulting approximation 
$f_N^m(x)$ inherits the Gibbs oscillations present in $f_N(x)$. More precisely, the total reconstruction error decomposes into a regularization error, arising from truncating the Gegenbauer expansion at degree $m$, and a truncation error, arising from replacing $f(x)$ by its corrupted partial sum $f_N(x)$. Consequently, this truncation error decays exponentially only if strict proportionality constraints are imposed on the polynomial degree $m$ and the parameter $\lambda$ relative to the Fourier truncation order $N$ (i.e., $\lambda = \alpha_0 N$ and $m = \beta_0 N$, where $\alpha_0, \beta_0$ are specific constants) \cite{boyd2005}.

If these constraints are not satisfied, for instance when $\lambda$ is held fixed while $m$ increases, the truncation error fails to decay, and the reconstruction converges to the corrupted partial sum rather than the true function, i.e.
\[
\lim_{m\rightarrow\infty}f_N^m(x)=f_N(x).
\]
This fundamental limitation motivates the development of the IPRM, which overcomes the non-commutativity of the two projections by instead enforcing orthogonality of the error with respect to the Fourier space, thereby eliminating the need for rigid parameter constraints.

\subsection{Inverse Polynomial Reconstruction Method}\label{subsec:iprm}
To overcome the limitations identified in Section \ref{subsec:direct}, the IPRM provides a highly flexible, parameter-independent alternative for resolving the Gibbs phenomenon. Rather than projecting the corrupted partial sum $f_N(x)$ in \eqref{Eq:F_approx} onto the Gegenbauer space, IPRM determines the unknown coefficients $g_l$ in the Gegenbauer expansion \eqref{Eq:Gegenbauer_approx} by enforcing that the Fourier coefficients of the reconstructed polynomial $f_m(x)$ exactly match the given Fourier data $\hat{f}_k$ in \eqref{Eq:F_coeff}.

Mathematically, this is achieved by defining the error
\[
E(x):=f_m(x) - f_N(x),
\]
and requiring it to be orthogonal to the Fourier space $\mathcal{F}_N$ rather than the Gegenbauer space $\mathcal{G}_m$:
\[
\langle E(x),\, e^{i k\pi x}\rangle = 0, 
\quad \forall\, k \in [-N, N].
\]
Substituting $f_m(x) = \sum_{l=0}^{m}g_l C_l^{\lambda}(x)$ and using the fact that $\langle f_N(x), e^{\mathrm{i}k\pi x}\rangle=\hat{f}_k$ (as defined in Section \ref{subsec:direct}). This orthogonality condition is equivalent to requiring that the 
Fourier coefficients of $f_m(x)$ equal the known data $\hat{f}_k$
\[
\sum_{l=0}^{m} g_l \cdot \frac{1}{2}\int_{-1}^{1} 
C_l^{\lambda}(x)\,e^{-\mathrm{i}k\pi x}\,dx = \hat{f}_k,
\quad \forall\, k \in [-N, N],
\]
which yields the linear system $\overline{W}G = \hat{F}$, where the transformation matrix $\overline{W} \in \mathbb{C}^{(2N+1)\times(m+1)}$ has elements
\begin{equation}\label{Eq:W1kl}
\overline{W}_{k,l} = \frac{1}{2}\int_{-1}^{1} C_l^{\lambda}(x)\,e^{-\mathrm{i}k\pi x}\,dx,\quad k\in[-N,N],\; l\in[0,m],
\end{equation}
$\hat{F} = [\hat{f}_{-N},\ldots,\hat{f}_N]^\top$ is the vector of known Fourier coefficients, and $G = [g_0,\ldots,g_m]^\top$ is the vector of unknown Gegenbauer coefficients. Note that $\overline{W}_{k,l}$ differs from the direct method $W_{k,l}$ in \eqref{Eq:Wkl}: the former enforces orthogonality with respect to the Fourier basis, whereas the latter involves the Gegenbauer weight function and normalization constant.

Because the error $E(x)$ is orthogonalized with respect to the Fourier basis rather than the Gegenbauer basis, IPRM effectively bypasses the non-commutativity of projections that plagues the direct method. As a result, spectral convergence is achieved without imposing the rigid proportionality constraints on $m$ and $\lambda$. The only requirement is that $m \leq 2N$, which ensures the linear system is 
not underdetermined \cite{jung2004}.

While this classical IPRM framework has proven highly successful in resolving the Gibbs phenomenon for standard Fourier series, its extension to fractional Fourier series requires non-trivial modifications. In particular, the complex-valued structure of the fractional basis functions $\phi_{k,\alpha}(x)$ necessitates a reformulation of both the orthogonality condition and the associated transformation matrix. This extension is developed in detail in Section \ref{sec:IPRMFFrS}.

\section{IPRM for One-Dimensional Fractional Fourier Series}
\label{sec:IPRMFFrS}

Building on the classical IPRM framework reviewed in Section~\ref{subsec:iprm}, we now extend the method to the fractional Fourier setting. The key distinction from the classical case is that the complex-valued structure of the fractional basis functions requires the reconstruction to simultaneously handle both the real and imaginary components of the fractional partial sum, as identified in Section~\ref{subsec:frft_series}.

\subsection{IPRM Formulation in the Fractional Fourier Setting}\label{subsec:formulation}

Following the change of variables $x \mapsto x/\pi$ introduced in Section~\ref{subsec:gegenbauer}, let $f(x)$ be a piecewise smooth function defined on $[-1,1]$. The fractional Fourier approximation $f_{N,\alpha}(x)$ and basis functions $\phi_{k,\alpha}(x)$ are given by \eqref{Eq:FrF_approx} and \eqref{Eq:FrF_basis} respectively, 
where the fractional Fourier space is
\[
\mathcal{F}_{N,\alpha}=\mathrm{span}\left\{\phi_{k,\alpha}(x):-N\leq k\leq N,\;0<\alpha<\frac{\pi}{2}\right\},	
\]
and the fractional Fourier coefficients are defined as
\[
c_{k,\alpha}=\langle f,\,\phi_{k,\alpha}\rangle=\frac{1}{2}\int_{-1}^{1} f(x)\,\overline{\phi_{k,\alpha}(x)}\,dx.
\]
The IPRM seeks an approximation $f_{m,\alpha}(x)$ of $f(x)$ in the Gegenbauer polynomial space $\mathcal{G}_m=\mathrm{span}\{C_l^{\lambda}(x)\mid 0\leq l\leq m\}$
\begin{equation}\label{Eq:FrF_Gegenbauer}
f_{m,\alpha}(x)=\sum_{l=0}^{m} \hat{g_l}\,C_l^{\lambda}(x),
\end{equation}
where the unknown coefficients $\hat{g_l}$ are to be determined. Following the IPRM principle, we define the error between the fractional Fourier projection of $f_{m,\alpha}(x)$ and the given fractional Fourier data
\[
E_{\alpha}(x):=f_{m,\alpha}^{N}(x)-f_{N,\alpha}(x),
\]
where $f_{m,\alpha}^{N}(x)=\sum_{k=-N}^{N}\langle f_{m,\alpha},\phi_{k,\alpha}\rangle\phi_{k,\alpha}(x)$ denotes the fractional Fourier projection of $f_{m,\alpha}(x)$ onto $\mathcal{F}_{N,\alpha}$. The coefficients $\hat{g}_l$ are then determined by enforcing $E_{\alpha}(x)$ to be orthogonal to $\mathcal{F}_{N,\alpha}$:
\[
\langle E_{\alpha}(x),\,\phi_{k,\alpha}(x)\rangle=0, \quad \forall\,k \in [-N,N].
\]
This orthogonality condition ensures that the reconstructed function $f_{m,\alpha}(x)$ exactly reproduces the given fractional Fourier data $c_{k,\alpha}$, thereby bypassing the non-commutativity of projections that limits the direct Gegenbauer method. Substituting \eqref{Eq:FrF_Gegenbauer} into the orthogonality condition and using the fact that $\langle f_{N,\alpha}(x),\,\phi_{k,\alpha}(x)\rangle = c_{k,\alpha}$, 
we obtain
\[
\left\langle\sum_{l=0}^{m}\hat{g_l} C_l^{\lambda}(x),\,\phi_{k,\alpha}(x)\right\rangle=c_{k,\alpha}, \quad \forall\,k\in[-N,N].
\]
Expanding the inner product and interchanging the sum and integral, this becomes
\[
\sum_{l=0}^{m}\hat{g_l}\cdot\frac{1}{2}\int_{-1}^{1}C_l^{\lambda}(x)\,\overline{\phi_{k,\alpha}(x)}\,dx=c_{k,\alpha},\quad \forall\,k\in[-N,N].
\]
We define the fractional transformation matrix $W_{\alpha}\in\mathbb{C}^{(2N+1)\times(m+1)}$ with elements
\begin{equation}\label{Eq:Wkl_alpha}
W_{k,l,\alpha}=\frac{1}{2}\int_{-1}^{1} C_l^{\lambda}(x)\,\overline{\phi_{k,\alpha}(x)}\,dx=\frac{1}{2}\int_{-1}^{1} C_l^{\lambda}(x)\,e^{\frac{\mathrm{i}}{2}x^2\cot\alpha}e^{-\mathrm{i}k\pi x}\,dx,
\end{equation}
for $k\in[-N,N]$ and $l\in[0,m]$. The orthogonality condition then takes the compact matrix form
\begin{equation}\label{Eq:linear_alpha}
W_{\alpha}\,\hat{G}=C_{\alpha},
\end{equation}
where 
\[
\hat{G}=[\hat{g}_0,\,\hat{g}_1,\,\ldots,\,\hat{g}_m]^\top \in \mathbb{C}^{m+1}
\]
is the vector of unknown Gegenbauer coefficients, and
\[
C_{\alpha}=[c_{-N,\alpha},\,c_{-N+1,\alpha},\,\ldots,\,
c_{N,\alpha}]^\top \in \mathbb{C}^{2N+1}
\]
is the vector of known fractional Fourier coefficients. Explicitly, the matrix $W_{\alpha}$ has the form
\[
W_{\alpha}= 
\begin{pmatrix}
W_{-N,0,\alpha} & W_{-N,1,\alpha} & \cdots & W_{-N,m,\alpha} \\
W_{-N+1,0,\alpha} & W_{-N+1,1,\alpha} & \cdots & W_{-N+1,m,\alpha} \\
\vdots & \vdots & \ddots & \vdots \\
W_{N,0,\alpha} & W_{N,1,\alpha} & \cdots & W_{N,m,\alpha}
\end{pmatrix}.
\]

\begin{remark}
Unlike the direct Gegenbauer transformation matrix $W_{k,l}$ in \eqref{Eq:Wkl}, which involves the weight function $(1-x^2)^{\lambda-1/2}$ and normalization constant $h_l^{\lambda}$, the elements $W_{k,l,\alpha}$ are unweighted inner products of $C_l^{\lambda}(x)$ with the fractional basis function $\overline{\phi_{k,\alpha}(x)}$. When $\alpha = \pi/2$, $W_{k,l,\alpha}$ reduces to the classical IPRM transformation matrix $\overline{W}_{k,l}$ in \eqref{Eq:W1kl}.
\end{remark}

\subsection{Analytical Structure of \texorpdfstring{$W_{k,l,\alpha}$}{}}
\label{subsec:analytical}

For $l=0$, since $C_0^{\lambda}(x)=1$, equation \eqref{Eq:Wkl_alpha} reduces to
\[
W_{k,0,\alpha}=\frac{1}{2}\int_{-1}^{1} e^{\frac{\mathrm{i}}{2}x^2\cot\alpha-\mathrm{i}k\pi x}\,dx.
\]
and let $u=x-k\pi/\cot\alpha$, we obtain
\[
W_{k,0,\alpha}=\frac{1}{2}e^{-\frac{\mathrm{i}k^2\pi^2}{2\cot\alpha}}\int_{-1-\frac{k\pi}{\cot\alpha}}^{1-\frac{k\pi}{\cot\alpha}}e^{\frac{\mathrm{i}\cot\alpha}{2}u^2}\,du.
\]
Introducing $t=\beta u$ where $\beta=\frac{(1-\mathrm{i})\sqrt{\cot\alpha}}{2}$, so that $\beta^2=-\frac{\mathrm{i}\cot\alpha}{2}$ and hence $e^{\frac{\mathrm{i}\cot\alpha}{2}u^2}=e^{-t^2}$, and expressing the result in terms of the complex error function $\mathrm{erf}(z)=\frac{2}{\sqrt{\pi}}\int_0^z e^{-t^2}\,dt$, $W_{k,0,\alpha}$ can be written as
\begin{equation}\label{Eq:Wk0_alpha_new}
W_{k,0,\alpha}=\frac{(1+\mathrm{i})\sqrt{\pi}}{4\sqrt{\cot\alpha}}\,e^{-\frac{\mathrm{i}k^2\pi^2}{2\cot\alpha}}\left[\mathrm{erf}\!\left(\beta\left(1-\frac{k\pi}{\cot\alpha}\right)\right)-\mathrm{erf}\!\left(\beta\left(-1-\frac{k\pi}{\cot\alpha}\right)\right)\right].
\end{equation}
For $l=1$, using $C_1^{\lambda}(x)=2\lambda x$ and the identity
\[
x\,e^{\frac{\mathrm{i}}{2}\cot\alpha\cdot x^2 - \mathrm{i}k\pi x}=\frac{1}{\mathrm{i}\cot\alpha}\frac{d}{dx}\left(e^{\frac{\mathrm{i}}{2}\cot\alpha\cdot x^2-\mathrm{i}k\pi x}\right)+\frac{k\pi}{\cot\alpha}e^{\frac{\mathrm{i}}{2}\cot\alpha\cdot x^2-\mathrm{i}k\pi x},
\]
Then, integrating both sides of the above equation yields
\[
W_{k,1,\alpha}=\lambda\left[\frac{1}{\mathrm{i}\cot\alpha}B_{k,\alpha}+\frac{2k\pi}{\cot\alpha}\,W_{k,0,\alpha}\right],
\]
where $B_{k,\alpha}=e^{\frac{\mathrm{i}}{2}x^2\cot\alpha - \mathrm{i}k\pi x}|_{-1}^{1}$ is the boundary term, given explicitly by
\[
B_{k,\alpha}=e^{\frac{\mathrm{i}}{2}\cot\alpha}\left[e^{-\mathrm{i}k\pi}-e^{\mathrm{i}k\pi}\right]=-2\mathrm{i}e^{\frac{\mathrm{i}}{2}\cot\alpha}\sin(k\pi)=0,
\]
where we have used $\sin(k\pi)=0$ for $k \in \mathbb{Z}$. Substituting the evaluated integral back into the definition of $W_{k,1,\alpha}$ gives
\begin{equation}\label{Eq:Wk1_alpha_new}
W_{k,1,\alpha}=\frac{2\lambda k\pi}{\cot\alpha}W_{k,0,\alpha}.	
\end{equation}
Next, for $l > 1$, we derive a recurrence for $W_{k,l+1,\alpha}$ by applying the Gegenbauer three-term recurrence \eqref{Eq:Gegenbauer_three} to the definition of $W_{k,l+1,\alpha}$
\[
(l+1)W_{k,l+1,\alpha}=(l+\lambda)\int_{-1}^{1} x\,C_l^{\lambda}(x)\,\overline{\phi_{k,\alpha}(x)}\,dx.-(l+2\lambda-1)W_{k,l-1,\alpha},
\]
which shows that each element $W_{k,l,\alpha}$ can in principle be determined recursively from the preceding two elements. However, the integral $\int_{-1}^{1}x\,C_l^\lambda(x)\,\overline{\phi_{k,\alpha}(x)}\,dx$ must itself be evaluated numerically, and the resulting quadrature errors accumulate at each recursive step, rendering the recurrence numerically unreliable for large $l$. Consequently, all elements $W_{kl,\alpha}$ for $l > 1$ are computed directly by Gauss-Legendre quadrature applied to \eqref{Eq:Wkl_alpha}, using the three-term recurrence \eqref{Eq:Gegenbauer_three} only to evaluate $C_l^{\lambda}(x)$ stably at the quadrature 
nodes. Specifically, let $\{x_q, w_q\}_{q=1}^{Q}$ denote the Gauss-Legendre nodes and weights on $[-1,1]$ with $Q$ quadrature points. Then
\[
W_{k,l,\alpha} \approx \frac{1}{2}\sum_{q=1}^{Q} w_q\,C_l^{\lambda}(x_q)\,e^{\frac{\mathrm{i}}{2}x_q^2\cot\alpha-\mathrm{i}k\pi x_q},
\]
where $C_l^{\lambda}(x_q)$ is evaluated stably at each node using the three-term recurrence \eqref{Eq:Gegenbauer_three}.

\subsection{Solvability and Conditioning of the Coefficient Matrix \texorpdfstring{$W_{\alpha}$}{}}\label{subsec:solvability}
With the elements $W_{k,l,\alpha}$ computed by Gauss-Legendre quadrature and assembled into $W_{\alpha}$, we now address the conditions under which the linear system in \eqref{Eq:linear_alpha} admits a unique solution. This system consists of $2N+1$ equations in $m+1$ unknowns, and for a unique solution to exist, the number of unknowns must not exceed the number of equations, which requires $m \leq 2N$. By the Rouché--Capelli theorem \cite{golub2013}, a unique solution exists if and only if 
\[
\mathrm{rank}(W_{\alpha})=\mathrm{rank}(W_{\alpha}\mid C_{\alpha})=m+1.
\]
When $m=2N$, the system is square and the rank requirement reduces to the nonsingularity of $W_{\alpha}$. When $m<2N$, the system is overdetermined and is solved in the least-squares sense. The conditioning of $W_\alpha$ depends on 
both the ratio $N/m$ and the Gegenbauer parameter $\lambda$; this dependence is analyzed in the subsequent content.

The nonsingularity of $W_{\alpha}$ in the square case $m=2N$ is established in the following proposition; for $m<2N$, the matrix $W_\alpha$ has full column rank by the same argument, so the least-squares solution is unique.

\begin{proposition}[Nonsingularity of $W_{\alpha}$]
For $\alpha\in(0,\pi/2),\lambda>0$, and $m=2N$, the transformation matrix $W_{\alpha}\in\mathbb{C}^{(2N+1)\times(2N+1)}$ in \eqref{Eq:linear_alpha} is nonsingular.
\end{proposition}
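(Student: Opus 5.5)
We argue by contradiction: we show that the kernel of $W_{\alpha}$ is trivial by interpreting a kernel vector as a polynomial whose fractional Fourier coefficients all vanish. Suppose $\hat G=(\hat g_0,\dots,\hat g_{2N})^\top\in\mathbb{C}^{2N+1}$ satisfies $W_{\alpha}\hat G=0$, and set $p(x)=\sum_{l=0}^{2N}\hat g_l C_l^{\lambda}(x)$. Since $\lambda>0$, each $C_l^{\lambda}$ has exact degree $l$ with leading coefficient $2^l\Gamma(l+\lambda)/(l!\,\Gamma(\lambda))\neq 0$. Hence $\{C_l^{\lambda}\}_{l=0}^{2N}$ is a basis of the polynomials of degree at most $2N$, and $\hat G=0$ if and only if $p\equiv 0$.

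By \eqref{Eq:Wkl_alpha}, the $k$-th row of $W_{\alpha}\hat G=0$ says
\[
\frac12\int_{-1}^{1} p(x)\,e^{\frac{\mathrm{i}}{2}x^2\cot\alpha}\,e^{-\mathrm{i}k\pi x}\,dx=0,\qquad k=-N,\dots,N .
\]
Define $q(x)=p(x)e^{\frac{\mathrm{i}}{2}x^2\cot\alpha}$, which is entire, nonzero if $p\neq 0$, and has no zeros other than those of $p$. The conditions state that the classical Fourier coefficients $\hat q_k$ of $q$ vanish for $|k|\le N$. Equivalently, $p$ is orthogonal in $L^2[-1,1]$ to the $(2N+1)$-dimensional space $\mathcal{F}_{N,\alpha}$. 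We must show that this forces $p=0$. This amounts to showing that the $(2N+1)\times(2N+1)$ pairing between $\mathcal{G}_{2N}$ and $\mathcal{F}_{N,\alpha}$ is nondegenerate, i.e.\ that no nonzero polynomial of degree at most $2N$ is orthogonal to $\mathcal{F}_{N,\alpha}$.

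The key step, and the main obstacle, is proving this nondegeneracy. Orthogonality to $2N+1$ functions does not by itself force a polynomial of degree $2N$ to vanish. Note also that the chirp is nonpolynomial, so $\alpha$ really does enter the question, even though the conditioning later turns out to be governed by an $\alpha$-independent Gram matrix.

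My first attempt is a sign-change (Chebyshev-system) argument. I would show that $\mathcal{F}_{N,\alpha}$, or suitable real combinations of it, forms an extended Chebyshev system on $[-1,1]$ together with the polynomials. Orthogonality of $p$ to a $(2N+1)$-dimensional Haar space would then force $\operatorname{Re}p$ and $\operatorname{Im}p$ to change sign at least $2N+1$ times after multiplication by the chirp. Tracking the argument of $q$ along $[-1,1]$ should give more zeros than the degree $2N$ allows.

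If the Haar property fails, the fallback is to write $W_{\alpha}=\Phi_\alpha^{*}P$, where $P$ samples the Gegenbauer basis and $\Phi_\alpha$ samples the chirped exponentials on a fine quadrature grid. I would then show that $\det W_{\alpha}$ is a real-analytic function of $\cot\alpha$ that does not vanish identically: it is nonzero at $\alpha=\pi/2$, where $W_{\alpha}$ reduces to the classical IPRM matrix $\overline{W}$ by the Remark, whose nonsingularity is known from \cite{jung2004}. Nonvanishing for every $\alpha\in(0,\pi/2)$ would then require an additional argument, such as a Gram-type positivity estimate.

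I expect this last point, excluding isolated degenerate angles, to be the genuinely hard step.
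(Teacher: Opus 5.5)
\textbf{There is a genuine gap: the step you flag as the main obstacle is never proved, and it carries the whole statement.} Your reduction is the same as the paper's. A kernel vector gives $p=\sum_{l=0}^{2N}\hat g_l C_l^{\lambda}$ with $\langle p,\phi_{k,\alpha}\rangle=0$ for $|k|\le N$, and $p\equiv 0$ then forces $\hat G=0$. The paper closes the middle step in one line, citing \cite{pei1999} for the claim that $\{\phi_{k,\alpha}\}_{k=-N}^{N}$ is a complete system on the polynomials of degree at most $2N$. Read the way the proof uses it, that claim says that no nonzero polynomial of degree $\le 2N$ is orthogonal to $\mathcal{F}_{N,\alpha}$. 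This is exactly the nondegeneracy you isolate, so it is a restatement of the proposition, and the paper supplies it only by appeal to the literature. Your diagnosis of the crux is therefore accurate. But what you have written is a reduction plus two unfinished strategies, not a proof.

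\textbf{Neither strategy closes the gap as sketched.}

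\emph{Sign-change route.} With $q=p\Phi$, the conditions read $\int_{-1}^{1}q\cos(k\pi x)\,dx=\int_{-1}^{1}q\sin(k\pi x)\,dx=0$ for $0\le k\le N$. So $\operatorname{Re}q$ and $\operatorname{Im}q$ are each orthogonal to the real trigonometric polynomials of degree $\le N$, a Haar space of dimension $2N+1$ on $[-1,1)$. Each therefore has at least $2N+1$ sign changes in $(-1,1)$.
\begin{itemize}
\item At $\alpha=\pi/2$ you may take $p$ real, and this contradicts $\deg p\le 2N$. That gives a clean proof of the classical case.
\item For $\alpha\in(0,\pi/2)$ the conditions no longer split into separate conditions on $\operatorname{Re}p$ and $\operatorname{Im}p$. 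Conjugation sends the system at $\cot\alpha$ to the one at $-\cot\alpha$, so $p$ must be allowed complex.
\item Then $\operatorname{Re}q$ and $\operatorname{Im}q$ are not polynomials. The chirp adds $\cot\alpha$ to the variation of $\arg q$ on $[-1,1]$, so the only available upper bound on sign changes is about $2N+1+\cot\alpha/\pi$. This never falls below the lower bound $2N+1$.
\item Even for real $p$, the factor $\cos(\tfrac12 x^2\cot\alpha)$ alone supplies roughly $\cot\alpha/\pi$ sign changes. So no contradiction comes out of the count.
\end{itemize}

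\emph{Analytic route.} $\det W_\alpha$ extends to an entire function of $\cot\alpha$ that is nonzero at $\cot\alpha=0$. This gives nonsingularity for all $\alpha$ outside a discrete set. The proposition is precisely the claim that this set is empty.

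\emph{Gram positivity.} This is not available in the square regime. The bound $\sigma_{\min}^2(W_\alpha)\ge\tfrac12[\lambda_{\min}(\mathrm{Gr})-\|T_N\|]$ of Lemma~\ref{lem:sv_bounds} is informative only when $\|T_N\|<\lambda_{\min}(\mathrm{Gr})$. The paper obtains that only for $N$ large relative to $m$, not for $m=2N$.

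\textbf{What you have actually established} is the classical case and nonsingularity off a discrete set of angles. To finish, you need one of two things:
\begin{itemize}
\item the completeness property the paper cites, or
\item a new argument, uniform in $\cot\alpha\in(0,\infty)$, ruling out a nonzero $p\in\mathcal{G}_{2N}$ with $p\Phi$ orthogonal to every $e^{\mathrm{i}k\pi x}$, $|k|\le N$.
\end{itemize}
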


\begin{proof}
Suppose $W_{\alpha}\hat{G} = 0$ for some $\hat{G}=[\hat{g}_0,\ldots,\hat{g}_{2N}]^\top$. Define $p(x)=\sum_{l=0}^{2N}\hat{g}_l C_l^{\lambda}(x)$. Then for each $k\in[-N,N]$,
\[
\frac{1}{2}\int_{-1}^{1}p(x)\,\overline{\phi_{k,\alpha}(x)}\,dx=0.
\]
This states that all fractional Fourier coefficients of $p(x)$ vanish. Since $\{\phi_{k,\alpha}\}_{k=-N}^{N}$ forms a complete system on the space of polynomials of degree at most $2N$ for $\alpha \in (0,\pi/2)$ \cite{pei1999}, it follows that $p(x) \equiv 0$ on $[-1,1]$. Since the Gegenbauer polynomials $\{C_l^{\lambda}\}_{l=0}^{2N}$ are orthogonal with respect to the weight function $(1-x^2)^{\lambda-1/2}$ on $[-1,1]$ and hence linearly independent, we conclude $\hat{g}_l=0$ for all $l=0,\ldots,2N$, so $W_{\alpha}$ is nonsingular.
\end{proof}

Although $W_{\alpha}$ is theoretically nonsingular (or of full column rank when $m<2N$) for all $\alpha\in(0,\pi/2)$, the numerical solution of the linear system may be sensitive to perturbations in the data $C_{\alpha}$ when the condition number 
$\kappa(W_{\alpha})$ is large. To quantify this sensitivity, we analyze how $\kappa(W_{\alpha})$ depends on $m,\lambda$ and $\alpha$. Recall that the system $\{e^{\mathrm{i}k\pi x}\}_{k\in\mathbb{Z}}$ satisfies the orthogonality relation
\[
\int_{-1}^{1}e^{\mathrm{i}k\pi x}\,e^{-\mathrm{i}l\pi x}\,dx=2\,\delta_{kl},
\]
so that $\{e_k\}_{k\in\mathbb{Z}}$ with $e_k(x)=\frac{1}{\sqrt{2}}\,e^{\mathrm{i}k\pi x}$ forms a complete orthonormal basis of $L^2[-1,1]$. Define the chirp factor $\Phi(x)=e^{\frac{\mathrm{i}}{2}\cot\alpha\cdot x^2}$, which satisfies $|\Phi(x)|=1$ for all $x\in[-1,1]$, and let $d_l(x)=\Phi(x)\,C_l^{\lambda}(x)$. Then each entry of $W_{\alpha}$ can be written as
\[
W_{k,l,\alpha}=\frac{1}{2}\int_{-1}^{1}d_l(x)\,e^{-\mathrm{i}k\pi x}\,dx
= \frac{1}{\sqrt{2}}\,c_k(d_l),
\]
where $c_k(g_l)=\langle d_l,e_k\rangle_{L^2}$ is the $k$-th Fourier coefficient of $d_l$ with respect to the orthonormal basis $\{e_k\}$. This Fourier coefficient representation connects $W_{\alpha}^*W_{\alpha}$ to the $L^2$ inner products $\langle d_l, d_m\rangle$ via the Parseval identity. Since $\|\Phi(x)\|=1$, these inner products reduce to $\int_{-1}^1 C_l^\lambda\,C_m^\lambda\,dx$, which defines a Gram matrix $\mathrm{Gr}$ that is independent of $\alpha$. The following lemma makes this precise.

\begin{proposition}[Chirp invariance]\label{prop:chirp}
Let $\mathrm{Gr}\in\mathbb{R}^{(m+1)\times(m+1)}$ be the unweighted Gram matrix
\begin{equation}\label{Eq:Gram}
\mathrm{Gr}_{l,j}=\int_{-1}^{1}C_l^{\lambda}(x)\,C_j^{\lambda}(x)\,dx,
\end{equation}
the $\mathrm{Gr}$ is independent of $\alpha$ and let $T_N\in\mathbb{C}^{(m+1)\times(m+1)}$ be the tail matrix
\begin{equation}\label{Eq:tail}
(T_N)_{l,j}=\sum_{|k|>N}\overline{c_k(d_l)}\,c_k(d_j).
\end{equation}
Then for any finite $N$,
\begin{equation}\label{Eq:WstarW}
W_{\alpha}^{*}W_{\alpha}=\frac{1}{2}\,\mathrm{Gr}-\frac{1}{2}\,T_N,
\end{equation}
where $T_N\succeq 0$.
\end{proposition}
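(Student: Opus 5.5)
The plan is a direct computation of the entries of $W_{\alpha}^{*}W_{\alpha}$ using the Fourier-coefficient representation $W_{k,l,\alpha}=\frac{1}{\sqrt2}\,c_k(d_l)$ derived just before the statement, followed by the polarized Parseval identity in the orthonormal basis $\{e_k\}$.

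First I would confirm the normalization. Since $c_k(d)=\langle d,e_k\rangle=\frac{1}{\sqrt2}\int_{-1}^{1}d(x)e^{-\mathrm{i}k\pi x}\,dx$, we get $\frac12\int_{-1}^1 d_l e^{-\mathrm{i}k\pi x}\,dx=\frac{1}{\sqrt2}c_k(d_l)$. Then for $0\le l,j\le m$,
\[
(W_{\alpha}^{*}W_{\alpha})_{l,j}=\sum_{k=-N}^{N}\overline{W_{k,l,\alpha}}\,W_{k,j,\alpha}=\frac12\sum_{|k|\le N}\overline{c_k(d_l)}\,c_k(d_j).
\]

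Next I would use the full sum. Each $d_l=\Phi C_l^{\lambda}$ is bounded, hence lies in $L^2[-1,1]$. The polarized Parseval identity for the complete orthonormal system $\{e_k\}$ then gives
\[
\sum_{k\in\mathbb{Z}}\overline{c_k(d_l)}\,c_k(d_j)=\langle d_j,d_l\rangle=\int_{-1}^{1}\overline{\Phi(x)}\,C_l^{\lambda}(x)\,\Phi(x)\,C_j^{\lambda}(x)\,dx=\int_{-1}^1 C_l^{\lambda}C_j^{\lambda}\,dx=\mathrm{Gr}_{l,j}.
\]
This uses $|\Phi|^2=1$ and the fact that the $C_l^{\lambda}$ are real. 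The series converges absolutely by Cauchy--Schwarz, since $(c_k(d_l))_k\in\ell^2$, so the tail $(T_N)_{l,j}$ is well defined and splitting off the finitely many terms with $|k|\le N$ is legitimate. Subtracting yields $W_{\alpha}^{*}W_{\alpha}=\frac12\mathrm{Gr}-\frac12 T_N$. The $\alpha$-independence of $\mathrm{Gr}$ is immediate, because its definition \eqref{Eq:Gram} contains no $\alpha$. Only $T_N$, through the $d_l$, carries any $\alpha$-dependence.

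Finally, for $T_N\succeq0$ I would take any $v\in\mathbb{C}^{m+1}$ and compute
\[
v^{*}T_Nv=\sum_{|k|>N}\Bigl|\sum_{j}c_k(d_j)v_j\Bigr|^2=\sum_{|k|>N}\bigl|c_k\bigl(\textstyle\sum_j v_jd_j\bigr)\bigr|^2\ge0,
\]
which uses linearity of $c_k$. Hermitian symmetry of $T_N$ is clear from its definition \eqref{Eq:tail}.

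The only step needing care is this interchange: exchanging the finite sum over $j$ with the infinite sum over $k$, which absolute convergence handles. Keeping the conjugation conventions consistent, so that the order of indices in $\mathrm{Gr}$ and $T_N$ matches, is routine. I do not expect a genuine obstacle.
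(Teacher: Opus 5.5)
Your proposal is correct and follows essentially the same route as the paper. Both write $W_{k,l,\alpha}=\frac{1}{\sqrt2}c_k(d_l)$, use the Parseval identity to split the full sum into the truncated part and the tail $T_N$, cancel the chirp via $|\Phi|=1$, and read off $T_N\succeq 0$ from its Gram-matrix structure. Your quadratic-form computation $v^{*}T_Nv=\sum_{|k|>N}|c_k(\sum_j v_jd_j)|^2$ and the absolute-convergence remark simply spell out what the paper states in one line each.
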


\begin{proof}
Direct computation gives
\[
(W_{\alpha}^{*}W_{\alpha})_{l,j}
=\sum_{k=-N}^{N}\overline{W_{k,l,\alpha}}\,W_{k,j,\alpha}
=\frac{1}{2}\sum_{k=-N}^{N}\overline{c_k(d_l)}\,c_k(d_j).
\]
By the Parseval identity \cite{rudin1974} for the complete orthonormal system $\{e_k\}$,
$\sum_{k\in\mathbb{Z}}\overline{c_k(d_l)}\,c_k(d_j)=\langle d_l,d_j\rangle_{L^2}$.
Splitting the full sum into the truncated part and the tail yields
\[
(W_{\alpha}^{*}W_{\alpha})_{l,j}=\frac{1}{2}\langle d_l,d_j\rangle_{L^2}-\frac{1}{2}\sum_{|k|>N}\overline{c_k(d_l)}\,c_k(d_j).
\]
For the first term, since $|\Phi(x)|=1$,
\[
\langle d_l,d_j\rangle_{L^2}=\int_{-1}^{1}\overline{\Phi(x)}\,C_l^{\lambda}(x)\cdot\Phi(x)\,C_j^{\lambda}(x)\,dx=\int_{-1}^{1}C_l^{\lambda}(x)\,C_j^{\lambda}(x)\,dx=\mathrm{Gr}_{l,j}.
\]
The chirp factor $\Phi$ is cancelled exactly, so $\mathrm{Gr}$ is independent of $\alpha$. For the second term, $T_N$ is a Gram matrix of the vectors $\bigl(c_k(d_l)\bigr)_{|k|>N}$, hence $T_N\succeq 0$.
\end{proof}

Proposition \ref{prop:chirp} reduces the analysis of $W_\alpha^*W_\alpha$ to the Gram matrix $\mathrm{Gr}$, which is independent of the transform angle $\alpha$. It follows that the condition number $\kappa(W_\alpha)=\sigma_{\max}(W_\alpha)
/\sigma_{\min}(W_\alpha)$ is also governed by $\mathrm{Gr}$ alone. To make this precise, we derive two-sided bounds on the extreme singular values of $W_\alpha$ in terms of the eigenvalues of $\mathrm{Gr}$.

\begin{lemma}[Singular value bounds]\label{lem:sv_bounds}
For any $N\geq 1$,
\begin{equation}\label{Eq:sv_bounds}
\frac{1}{2}\bigl[\lambda_{\min}(\mathrm{Gr})-\|T_N\|\bigr]\leq \sigma_{\min}^2(W_{\alpha})\leq \sigma^2_{\max}(W_{\alpha})\leq \frac{1}{2}\,\lambda_{\max}(\mathrm{Gr}),
\end{equation}
where $\sigma_{\min}(\cdot), \sigma_{\max}(\cdot), \lambda_{\min}(\cdot), \text{ and } \lambda_{\max}(\cdot)$ are the minimum and maximum singular values, and the minimum and maximum eigenvalues, respectively. In particular, when $m$ is chosen such that $N$ is sufficiently large relative to $m$, the tail satisfies $\|T_N\| \leq \delta\,\lambda_{\min}(\mathrm{Gr})$ for any prescribed $\delta \in (0,1)$. Consequently, the condition number is strictly bounded by
\begin{equation}\label{Eq:cond_bound}
\kappa^2(W_{\alpha})\leq\frac{1}{(1-\delta)}\cdot\frac{\lambda_{\max}(\mathrm{Gr})}{\lambda_{\min}(\mathrm{Gr})}.
\end{equation}
\end{lemma}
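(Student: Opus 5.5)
The plan is to read everything off the identity \eqref{Eq:WstarW} of Proposition~\ref{prop:chirp}, $W_\alpha^*W_\alpha=\tfrac12\mathrm{Gr}-\tfrac12T_N$ with $T_N\succeq0$, using the Rayleigh quotient. For a unit vector $v\in\mathbb{C}^{m+1}$ we have $\|W_\alpha v\|^2=v^*W_\alpha^*W_\alpha v=\tfrac12 v^*\mathrm{Gr}\,v-\tfrac12 v^*T_Nv$, so the extremal values of $\|W_\alpha v\|^2$ over the unit sphere are $\sigma^2_{\min}(W_\alpha)$ and $\sigma^2_{\max}(W_\alpha)$. Note that $\mathrm{Gr}$ is real symmetric, and it is positive definite because it is the unweighted $L^2$ Gram matrix of the linearly independent family $\{C_l^\lambda\}$; hence its eigenvalues are real and positive.

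\emph{Upper bound.} Since $v^*T_Nv\ge0$, we get $\|W_\alpha v\|^2\le\tfrac12 v^*\mathrm{Gr}\,v\le\tfrac12\lambda_{\max}(\mathrm{Gr})$. Taking the supremum over $v$ gives the right-hand inequality in \eqref{Eq:sv_bounds}.

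\emph{Lower bound.} We have $v^*\mathrm{Gr}\,v\ge\lambda_{\min}(\mathrm{Gr})$, and $v^*T_Nv\le\lambda_{\max}(T_N)=\|T_N\|$ because $T_N$ is Hermitian positive semidefinite. Taking the infimum gives the left inequality. The middle inequality $\sigma_{\min}\le\sigma_{\max}$ is trivial.

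\emph{The ``in particular'' claim.} This is the one step with real content, and I expect it to be the main obstacle. I would bound $\|T_N\|\le\operatorname{tr}T_N=\sum_{l=0}^m\sum_{|k|>N}|c_k(d_l)|^2$, which is the sum of the squared Fourier tails of the finitely many $L^2$ functions $d_l=\Phi C_l^\lambda$. For fixed $m$ (and fixed $\alpha,\lambda$), each tail tends to $0$ as $N\to\infty$ by Parseval, since $\sum_k|c_k(d_l)|^2=\|d_l\|^2<\infty$. The right-hand side $\lambda_{\min}(\mathrm{Gr})>0$ depends only on $m$ and $\lambda$. Hence for any $\delta\in(0,1)$ there is an $N_0(m,\lambda,\alpha,\delta)$ such that $\|T_N\|\le\delta\lambda_{\min}(\mathrm{Gr})$ for all $N\ge N_0$; this is the precise meaning of ``$N$ sufficiently large relative to $m$''.

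The $N$ needed could be quantified via integration by parts. Each $d_l$ is smooth on $[-1,1]$ but not periodic, so its Fourier coefficients decay only like $O(1/|k|)$ with constants growing in $l$ and $\cot\alpha$. I would not attempt to make $N_0$ explicit, or $\alpha$-uniform, beyond this qualitative statement.

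\emph{Condition number bound.} Substituting into the lower bound gives $\sigma^2_{\min}\ge\tfrac12(1-\delta)\lambda_{\min}(\mathrm{Gr})>0$. Dividing the upper bound by this yields \eqref{Eq:cond_bound}, and also re-confirms full column rank.
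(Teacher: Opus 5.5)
Your proposal is correct and follows essentially the same route as the paper, which bounds $W_\alpha^*W_\alpha=\tfrac12\mathrm{Gr}-\tfrac12T_N$ from above using $T_N\succeq0$ and from below using Weyl's inequality (equivalent to your Rayleigh-quotient argument), then lets the Parseval tails vanish for fixed $m$ and divides the two bounds. Your bound $\|T_N\|\le\operatorname{tr}T_N$ is in fact the right one here: the paper's stated $\|T_N\|\le\max_l\sum_{|k|>N}|c_k(d_l)|^2$ can fail by a factor up to $m+1$ for a positive semidefinite Gram matrix, although this does not affect the limit for fixed $m$.
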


\begin{proof}
The upper bound follows from $T_N\succeq 0$, the \eqref{Eq:WstarW} gives
\[
W_{\alpha}^{*}W_{\alpha}=\frac{1}{2}\mathrm{Gr}-\frac{1}{2}T_N\preceq\frac{1}{2}\mathrm{Gr},	
\]
so $\sigma^2_{\max}(W_{\alpha})=\lambda_{\max}(W_{\alpha}^{*}W_{\alpha})\leq\frac{1}{2}\lambda_{\max}(\mathrm{Gr})$. The lower bound follows from the Weyl inequality \cite{horn2012},
\[
\lambda_{\min}\!\bigl(\tfrac{1}{2}\mathrm{Gr}-\tfrac{1}{2}T_N\bigr)
\geq\frac{1}{2}\lambda_{\min}(\mathrm{Gr})+\lambda_{\min}(-\frac{1}{2}T_N)=\lambda_{\min}(\mathrm{Gr})-\frac{1}{2}\lambda_{\max}(T_N)\geq \lambda_{\min}(\mathrm{Gr})-\frac{1}{2}\|T_N\|.
\]
Therefore, $\sigma^2_{\min}(W_{\alpha})=\lambda_{\min}(W_{\alpha}^{*}W_{\alpha})\geq\frac{1}{2}\bigl[\lambda_{\min}(\mathrm{Gr})-\|T_N\|\bigr]$. Since $C_l^{\lambda}$ is a polynomial on the bounded interval $[-1,1]$ and $\|\Phi\|=1$, we have $d_l\in L^2[-1,1]$. Then gives $\sum_{k\in\mathbb{Z}}|c_k(d_l)|^2=\|d_l\|_{L^2}^2<\infty$, so $\|T_N\|\leq\max_l\sum_{|k|>N}|c_k(d_l)|^2\xrightarrow{N\to\infty} 0$ as the tail of a convergent series, i.e. $\lim_{N \to \infty} \|T_N\|=0$. Since the entries of $\mathrm{Gr}$ depend only on $m$ and $\lambda$ (not on $N$), $\lambda_{\min}(\mathrm{Gr})$ is fixed for a given $m$. Therefore, for any prescribed $\delta\in(0,1)$, the condition $\|T_N\|\leq\delta\,\lambda_{\min}(\mathrm{Gr})$ is satisfied whenever $N$ is sufficiently large relative to $m$. Finally, by applying the definition of a limit and dividing the upper bound by the lower bound, the \eqref{Eq:cond_bound} is obtained.
\end{proof}

By Lemma \ref{lem:sv_bounds}, the condition number of $W_{\alpha}$ is governed by the unweighted Gram matrix $\mathrm{Gr}$, which depends only on the choice of polynomial basis $C_l^{\lambda}$. We now analyze the relationship between $\lambda_{\min}(\mathrm{Gr}),\lambda_{\max}(\mathrm{Gr})$ and the parameters $\lambda,m$.

\begin{lemma}[Lower bound on $\lambda_{\min}(\mathrm{Gr})$]\label{lem:lower}
For any $\lambda\geq\frac{1}{2}$ and any $m\geq 0$,
\begin{equation}\label{Eq:lower}
\lambda_{\min}(\mathrm{Gr})\geq\min_{0\leq l\leq m}h_l^{\lambda},
\end{equation}
where $h_l^{\lambda}$ in \eqref{Eq:h_l_lambda} is the weighted orthogonality constant of $C_l^{\lambda}$.
\end{lemma}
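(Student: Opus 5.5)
The plan is to bound the Rayleigh quotient of $\mathrm{Gr}$ from below by comparing the unweighted $L^2$ norm of a polynomial with its Gegenbauer-weighted norm. Since $\mathrm{Gr}$ is real symmetric (indeed Hermitian, as a Gram matrix), we have
\[
\lambda_{\min}(\mathrm{Gr})=\min_{v\neq 0}\frac{v^{*}\mathrm{Gr}\,v}{v^{*}v},
\]
with $v\in\mathbb{C}^{m+1}$. So it suffices to show $v^{*}\mathrm{Gr}\,v\geq \bigl(\min_{0\leq l\leq m}h_l^{\lambda}\bigr)\,\|v\|_2^2$ for every $v$.

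First, I will fix $v=(v_0,\ldots,v_m)^\top$ and set $p(x)=\sum_{l=0}^{m}v_l C_l^{\lambda}(x)$. Expanding the quadratic form using \eqref{Eq:Gram} and the fact that the $C_l^{\lambda}$ are real gives
\[
v^{*}\mathrm{Gr}\,v=\sum_{l,j}\overline{v_l}\,v_j\int_{-1}^{1}C_l^{\lambda}C_j^{\lambda}\,dx=\int_{-1}^{1}|p(x)|^2\,dx.
\]

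Second, I will use the hypothesis $\lambda\geq\frac12$. It gives $\lambda-\frac12\geq 0$, so the weight satisfies $0\leq(1-x^2)^{\lambda-1/2}\leq 1$ on $[-1,1]$. Hence
\[
\int_{-1}^{1}|p|^2\,dx\;\geq\;\int_{-1}^{1}(1-x^2)^{\lambda-\frac12}|p|^2\,dx.
\]

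Third, I will evaluate the weighted integral by the orthogonality relation of Section~\ref{subsec:gegenbauer}; the cross terms vanish, leaving
\[
\int_{-1}^{1}(1-x^2)^{\lambda-\frac12}|p|^2\,dx=\sum_{l=0}^{m}|v_l|^2h_l^{\lambda}\geq\Bigl(\min_{0\leq l\leq m}h_l^{\lambda}\Bigr)\|v\|_2^2.
\]
Here $h_l^{\lambda}>0$ by \eqref{Eq:h_l_lambda} for $\lambda>0$. Dividing by $\|v\|_2^2$ and minimizing over $v$ yields \eqref{Eq:lower}.

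There is no real obstacle here. The only points requiring care are that the comparison of weights is exactly where $\lambda\geq\frac12$ enters (for $\lambda<\frac12$ the weight exceeds $1$ near $\pm1$ and the inequality reverses), and that complex coefficients are handled by writing $|p|^2=p\overline{p}$ with real $C_l^{\lambda}$. I would also remark that the case $\lambda=\frac12$ is sharp: there $\mathrm{Gr}$ is the diagonal Legendre Gram matrix and equality holds.
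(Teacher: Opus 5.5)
Your proof is correct and follows the same route as the paper: you write the Rayleigh quotient as $\int_{-1}^{1}|p|^2\,dx$, use $(1-x^2)^{\lambda-1/2}\le 1$ for $\lambda\ge\frac12$ to pass to the weighted norm, and evaluate that norm by orthogonality as $\sum_l |v_l|^2 h_l^{\lambda}$. The only difference is that you allow complex test vectors, whereas the paper uses real unit vectors (enough, since $\mathrm{Gr}$ is real symmetric); your remark that equality holds at $\lambda=\frac12$ is a correct addition.
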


\begin{proof}
For any unit vector $\mathbf{c}=(c_0,\ldots,c_m)^T$ with $\|\mathbf{c}\|=1$, let $p(x)=\sum_{l=0}^{m}c_l\,C_l^{\lambda}(x)$. Then
\[
\mathbf{c}^T \mathrm{Gr}\,\mathbf{c}=\int_{-1}^{1}p(x)^2\,dx.
\]
Since $\lambda\geq\frac{1}{2}$, we have $(1-x^2)^{\lambda-1/2}\leq 1$ for all $x\in[-1,1]$, hence
\[
\int_{-1}^{1}p(x)^2\,dx\geq\int_{-1}^{1}p(x)^2(1-x^2)^{\lambda-1/2}\,dx=\sum_{l=0}^{m}c_l^2\,h_l^{\lambda}\geq\min_{0\leq l\leq m} h_l^{\lambda}.
\]
The equality utilizes the orthogonality of $C_l^{\lambda}$ with respect to the weight $(1-x^2)^{\lambda-1/2}$. The final inequality then follows directly from the assumption that the coefficient vector $\mathbf{c}$ is a unit vector (i.e., $\sum_{l=0}^{m}c_l^2=1$).
\end{proof}

The condition $\lambda \geq 1/2$ in Lemma \ref{lem:lower} arises from the inequality $(1-x^2)^{\lambda-1/2} \leq 1$ on $[-1,1]$, which holds only when $\lambda \geq 1/2$ and is essential to the proof. This restriction is not a serious limitation, as the range $\lambda \geq 1/2$ already covers the most widely used Gegenbauer families, including the Legendre polynomials ($\lambda=1/2$) and the Chebyshev polynomials of the second kind ($\lambda=1$). The lower bound is useful only when $\min_{0\leq l\leq m}h_l^{\lambda}$ remains bounded away from zero as $m$ grows. To determine for which values of $\lambda$ this holds, we analyze the monotonicity of the sequence $\{h_l^{\lambda}\}_{l\geq 0}$.

\begin{lemma}[Monotonicity of $h_l^{\lambda}$]\label{lem:mono}
For any $\lambda>0$ and any $l\geq 0$. The ratio of consecutive constants satisfies
\begin{equation}\label{Eq:ratio}
\frac{h_{l+1}^{\lambda}}{h_l^{\lambda}}=\frac{l+2\lambda}{l+1}\cdot\frac{l+\lambda}{l+1+\lambda}.
\end{equation}
Consequently:
\begin{enumerate}[(i)]
\item If $0<\lambda<1$, then $h_{l+1}^{\lambda}<h_l^{\lambda}$ for all $l\geq 0$, so $\min_l h_l^{\lambda}=h_m^{\lambda}\to 0$ as $m\to\infty$.
\item If $\lambda=1$, then $h_{l+1}^{1}=h_l^{1}$ for all $l\geq 0$, i.e.\ $h_l^{1}=\frac{\pi}{2}$ is constant.
\item If $\lambda>1$, then $h_{l+1}^{\lambda}>h_l^{\lambda}$ for all $l\geq 0$, so $\min_l h_l^{\lambda}=h_0^{\lambda}>0$.
\end{enumerate}
\end{lemma}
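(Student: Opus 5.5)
The plan is to compute the ratio \eqref{Eq:ratio} directly from the closed form \eqref{Eq:h_l_lambda}, and then read off the three cases from the sign of (numerator minus denominator). Writing
\[
h_l^{\lambda}=\sqrt{\pi}\,\frac{\Gamma(\lambda+1/2)}{\Gamma(\lambda)}\cdot\frac{\Gamma(l+2\lambda)}{l!\,\Gamma(2\lambda)}\cdot\frac{1}{l+\lambda},
\]
the prefactor $\sqrt{\pi}\,\Gamma(\lambda+1/2)/(\Gamma(\lambda)\Gamma(2\lambda))$ is independent of $l$ and cancels in the quotient. Using $\Gamma(l+1+2\lambda)=(l+2\lambda)\Gamma(l+2\lambda)$ and $(l+1)!=(l+1)\,l!$, we get $C_{l+1}^{\lambda}(1)/C_l^{\lambda}(1)=(l+2\lambda)/(l+1)$. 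The factor $1/(l+\lambda)$ contributes $(l+\lambda)/(l+1+\lambda)$. Multiplying these gives \eqref{Eq:ratio}. All quantities are positive for $\lambda>0$, so the ratio is well defined and positive.

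For the trichotomy, I compare $(l+2\lambda)(l+\lambda)$ with $(l+1)(l+1+\lambda)$. Expanding both sides:
\[
(l+2\lambda)(l+\lambda)=l^2+3\lambda l+2\lambda^2,
\]
\[
(l+1)(l+1+\lambda)=l^2+(2+\lambda)l+1+\lambda .
\]
Their difference is
\[
2(\lambda-1)l+2\lambda^2-\lambda-1=(\lambda-1)\bigl(2l+2\lambda+1\bigr).
\]
Since $2l+2\lambda+1>0$ for $l\ge0$ and $\lambda>0$, the sign of the difference equals the sign of $\lambda-1$. So the ratio is $<1$, $=1$, or $>1$ exactly according as $\lambda<1$, $\lambda=1$, or $\lambda>1$.

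This gives each case directly.
\begin{enumerate}[(i)]
\item For $0<\lambda<1$ the sequence is strictly decreasing, so $\min_{0\le l\le m}h_l^\lambda=h_m^\lambda$.
\item For $\lambda=1$ the sequence is constant, and evaluating $h_0^1=\sqrt{\pi}\,\Gamma(3/2)/(\Gamma(1)\cdot1)=\sqrt{\pi}\cdot\sqrt{\pi}/2=\pi/2$ gives the stated value.
\item For $\lambda>1$ the sequence is strictly increasing, so $\min_{0\le l\le m}h_l^\lambda=h_0^\lambda>0$.
\end{enumerate}

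The only step needing more than algebra is the limit $h_m^\lambda\to0$ in case (i). Monotonicity alone only gives convergence to some nonnegative limit, so I would use an asymptotic rate instead. By Gamma-function ratio asymptotics, $\Gamma(m+2\lambda)/m!\sim m^{2\lambda-1}$, hence $h_m^\lambda\sim \mathrm{const}\cdot m^{2\lambda-2}$, which tends to $0$ because $2\lambda-2<0$.

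An alternative that avoids asymptotics is to bound the logarithm of the product of ratios. Each factor satisfies $1-r_l\ge c/(l+1)$ for some $c>0$ depending only on $\lambda$, from the factored difference above. Then $\log h_m^\lambda-\log h_0^\lambda\le-\sum_l c/(l+1)\to-\infty$, since the harmonic series diverges. I expect this limit to be the main (mild) obstacle; everything else is bookkeeping with $\Gamma(z+1)=z\Gamma(z)$.
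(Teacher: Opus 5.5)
Your proposal is correct and matches the paper's proof: both obtain the ratio from the closed form of $h_l^{\lambda}$, reduce the trichotomy to the sign of $(\lambda-1)(2l+2\lambda+1)$, and evaluate $h_0^{1}=\pi/2$ (the paper via $\int_{-1}^{1}(1-x^2)^{1/2}\,dx$, you via the Gamma formula). You also justify $h_m^{\lambda}\to 0$ in case (i), via the $m^{2\lambda-2}$ asymptotics or the harmonic-series bound on the product of ratios, which the paper asserts without proof, and your argument covers all of $0<\lambda<1$, whereas the paper's proof only mentions $\tfrac{1}{2}\le\lambda<1$.
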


\begin{proof}
Identity~\eqref{Eq:ratio} follows directly from the explicit formula \eqref{Eq:h_l_lambda}. Writing the ratio as
\[
\frac{h_{l+1}^{\lambda}}{h_l^{\lambda}}
=\frac{(l+2\lambda)(l+\lambda)}{(l+1)(l+1+\lambda)}
=\frac{l^2+(3\lambda)l+2\lambda^2}{l^2+(2+\lambda)l+(1+\lambda)},
\]
one verifies that this equals~$1$ if and only if $(3\lambda)l+2\lambda^2=(2+\lambda)l+(1+\lambda)$, i.e.\ $(\lambda-1)(2l+2\lambda+1)=0$. Since $2l+2\lambda+1>0$, equality holds for all $l$ if and only if $\lambda=1$. When $\frac{1}{2}\leq\lambda<1$, the factor $(\lambda-1)<0$ gives $h_{l+1}^{\lambda}/h_l^{\lambda}<1$; when $\lambda>1$, $h_{l+1}^{\lambda}/h_l^{\lambda}>1$.
For $\lambda=1$, the constant value is $h_0^{1}=\int_{-1}^{1}(1-x^2)^{1/2}\,dx=\pi/2$.
\end{proof}

The preceding two lemmas control $\lambda_{\min}(\mathrm{Gr})$ from below. To complete the condition number estimate, we now turn to $\lambda_{\max}(\mathrm{Gr})$.

\begin{lemma}[Upper bound on $\lambda_{\max}(\mathrm{Gr})$]\label{lem:upper}
For any $\lambda>0$ and $m\geq 1$, the maximum eigenvalue of $\mathrm{Gr}$ admits the upper bound
\begin{equation}\label{Eq:upper}
\lambda_{\max}(\mathrm{Gr})\leq
\begin{cases}
C_\lambda\,m^{2\lambda-1}, & 0<\lambda<1,\\[4pt]
C_\lambda\,m\ln m,         & \lambda=1,\\[4pt]
C_\lambda\,m^{4\lambda-3}, & \lambda>1,
\end{cases}
\end{equation}
where $C_\lambda>0$ is a constant depending only on $\lambda$.
\end{lemma}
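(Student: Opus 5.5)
Since $\mathrm{Gr}$ is symmetric positive semidefinite (it is a Gram matrix of real functions), I will use the crude but sufficient bound
\[
\lambda_{\max}(\mathrm{Gr})\leq \operatorname{tr}(\mathrm{Gr})=\sum_{l=0}^{m}\int_{-1}^{1}\bigl(C_l^{\lambda}(x)\bigr)^2\,dx .
\]
This reduces everything to sharp estimates of the unweighted $L^2$ norms $\|C_l^{\lambda}\|_{L^2}^2$ as functions of $l$, followed by summation in $l$. The three regimes in \eqref{Eq:upper} should come out of the summation step. Each term grows like a power $l^{\gamma}$, possibly with a log factor, and $\sum_{l\le m} l^{\gamma}\sim m^{\gamma+1}$.

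For the individual norms I will use two classical pointwise bounds for $\lambda>0$ (Szeg\H{o}). The first is the endpoint bound $|C_l^{\lambda}(x)|\leq C_l^{\lambda}(1)=\frac{\Gamma(l+2\lambda)}{l!\,\Gamma(2\lambda)}\asymp l^{2\lambda-1}$. The second is the interior (Bernstein-type) bound
\[
(1-x^2)^{\lambda/2}\,|C_l^{\lambda}(x)|\leq c_\lambda\, l^{\lambda-1},\qquad x\in(-1,1).
\]
I will split $[-1,1]$ at $1-|x|\asymp l^{-2}$.

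On the two boundary layers, the endpoint bound gives a contribution of order $l^{4\lambda-2}\cdot l^{-2}=l^{4\lambda-4}$. On the interior, the second bound gives
\[
l^{2\lambda-2}\int_{l^{-2}}^{1} t^{-\lambda}\,dt \;\asymp\;
\begin{cases} l^{2\lambda-2}, & \lambda<1,\\ \ln l, & \lambda=1,\\ l^{2\lambda-2}\,l^{2\lambda-2}=l^{4\lambda-4}, & \lambda>1.\end{cases}
\]
When $\lambda<1$ the boundary term $l^{4\lambda-4}$ is dominated by the interior term $l^{2\lambda-2}$. Hence $\|C_l^{\lambda}\|_{L^2}^2\lesssim l^{2\lambda-2}$, $\ln l$, or $l^{4\lambda-4}$ in the three cases.

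Summing over $1\le l\le m$ and adding the $l=0$ term (which equals $2$) then gives:
\begin{itemize}
\item $m^{2\lambda-1}$ for $\tfrac12<\lambda<1$;
\item $\sum_{l\le m}\ln l\le m\ln m$ for $\lambda=1$;
\item $m^{4\lambda-3}$ for $\lambda>1$.
\end{itemize}
In each case the bound is up to a constant $C_\lambda$. This is the claimed estimate.

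The main obstacle is the interior bound with the correct power $l^{\lambda-1}$, uniformly up to the boundary layer. I would quote it from Szeg\H{o}'s monograph (the inequality for ultraspherical polynomials via the Sonine-type argument), rather than reprove it.

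A second point needs care: the case $0<\lambda\le\tfrac12$. There the sum $\sum l^{2\lambda-2}$ converges, so the trace is merely $O(1)$, which is not $\le C_\lambda m^{2\lambda-1}$ when $2\lambda-1<0$. I therefore expect the argument to need either the restriction $\lambda>\tfrac12$ in the first case or reading the bound as $C_\lambda\max(1,m^{2\lambda-1})$. I would state this adjustment explicitly. It is harmless for the conditioning discussion, since Lemma~\ref{lem:lower} already assumes $\lambda\ge\tfrac12$.
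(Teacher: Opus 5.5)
Your argument is correct for $\lambda>\tfrac12$ and has the same skeleton as the paper's proof. Both bound $\lambda_{\max}(\mathrm{Gr})\le\operatorname{tr}(\mathrm{Gr})=\sum_{l\le m}\|C_l^{\lambda}\|_{L^2}^2$, use per-degree bounds of order $l^{2\lambda-2}$, $\ln l$, $l^{4\lambda-4}$, and then sum.

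The genuine difference is where the per-degree bounds come from. The paper reduces to $[0,1]$ by parity and imports them from Ferizovi\'c's Proposition~1, plus an explicit digamma formula at $\lambda=1$. You derive them yourself from the endpoint bound $|C_l^{\lambda}|\le C_l^{\lambda}(1)$ and an interior bound, split at the boundary layer $1-|x|\lesssim l^{-2}$. Your route is self-contained modulo classical pointwise estimates and shows where the three regimes come from. For $\lambda<1$ the $L^2$ mass is spread over the interior; for $\lambda>1$ it sits at distance $O(l^{-2})$ from $\pm1$. The paper's route is shorter.

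One point to tighten: the Sonine-type inequality $(\sin\theta)^{\lambda}|C_n^{\lambda}(\cos\theta)|\le c_\lambda n^{\lambda-1}$ in Szeg\H{o} is the $0<\lambda<1$ statement. For $\lambda\ge1$, obtain the same uniform bound as follows:
\begin{itemize}
\item On $c/n\le\theta\le\pi/2$, use the Jacobi estimate $P_n^{(\alpha,\alpha)}(\cos\theta)=O(n^{-1/2}\theta^{-\alpha-1/2})$ with $\alpha=\lambda-\tfrac12$.
\item Convert via $C_n^{\lambda}=\frac{\Gamma(\lambda+\frac12)\Gamma(n+2\lambda)}{\Gamma(2\lambda)\Gamma(n+\lambda+\frac12)}P_n^{(\alpha,\alpha)}$, whose prefactor is $\asymp n^{\lambda-1/2}$.
\item For $\theta\le c/n$, use the endpoint bound.
\end{itemize}

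Your caveat about small $\lambda$ is right, and it understates the problem. For $0<\lambda<\tfrac12$ the statement itself fails, not just the trace argument: $\lambda_{\max}(\mathrm{Gr})\ge\mathrm{Gr}_{0,0}=2$, while $C_\lambda m^{2\lambda-1}\to0$. The paper's proof breaks exactly there, since $\int_0^m x^{2\lambda-2}\,dx$ diverges for $\lambda\le\tfrac12$.

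Correct one claim, however. At $\lambda=\tfrac12$ the series $\sum l^{-1}$ does not converge, and $\operatorname{tr}(\mathrm{Gr})=\sum_{l\le m}\frac{2}{2l+1}\sim\ln m$, so the trace gives only $O(\ln m)$. The claimed $O(1)$ bound does still hold at $\lambda=\tfrac12$, but for a different reason: $\mathrm{Gr}$ is diagonal for Legendre polynomials, so $\lambda_{\max}(\mathrm{Gr})=2$. This case matters because it is used in Theorem~\ref{thm:condW}(i).

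Finally, at $\lambda=1$ your per-degree bound is really $\lesssim 1+\ln l$ (for instance $\|C_1^{1}\|_{L^2}^2=\tfrac83$, while $\ln 1=0$). So $C_\lambda m\ln m$ is a valid bound only for $m\ge2$, since it vanishes at $m=1$. The paper has the same edge case.
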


\begin{proof}
Since $\mathrm{Gr}$ is a Gram matrix, it is positive semi-definite, so all eigenvalues are non-negative. Hence
\[
\lambda_{\max}(\mathrm{Gr})\leq\mathrm{tr}(\mathrm{Gr})=\sum_{l=0}^{m}\mathrm{Gr}_{l,l}=\sum_{l=0}^{m}\|C_l^{\lambda}\|_{L^2[-1,1]}^2.
\]
By the parity $C_l^{\lambda}(-x)=(-1)^l C_l^{\lambda}(x)$, the function $[C_l^{\lambda}(x)]^2$ is even, so $\|C_l^{\lambda}\|_{L^2[-1,1]}^2=2\int_0^1 [C_l^{\lambda}(x)]^2\,dx$. Ferizovi\'c \cite[Proposition~1]{Ferizovic2022} established the following asymptotic estimates for the half-interval $L^2$-norm:
\[
\int_0^1 [C_l^{\lambda}(x)]^2\,dx \leq
\begin{cases}
c_\lambda\,l^{2\lambda-2}, & 0<\lambda<1,\\[4pt]
c\,\ln l,                  & \lambda=1,\\[4pt]
c_\lambda\,l^{4\lambda-4}, & \lambda>1,
\end{cases}
\]
where the $\lambda=1$ case uses the explicit identity $2\|C_l^{(1)}\|_{L^2[0,1]}^2=\psi(l+\tfrac{3}{2})+\gamma+\log 4$ \cite{BeltranFerizovic2020} and the bound $\psi(l+\tfrac{3}{2})\leq\ln l+1$ for $l\geq 1$, with $\psi$ the digamma function. Substituting these estimates and summing over $l$ yields the three cases. For $0<\lambda<1$, since $2\lambda-2\in(-2,0)$,
\[
\sum_{l=1}^{m}l^{2\lambda-2}\leq\int_0^m x^{2\lambda-2}\,dx + 1=\frac{m^{2\lambda-1}}{2\lambda-1}+1\leq C_\lambda\,m^{2\lambda-1}.
\]
For $\lambda=1$,
\[
\sum_{l=1}^{m}\ln l \leq m\ln m.
\]
For $\lambda>1$, since $4\lambda-4>0$,
\[
\sum_{l=1}^{m}l^{4\lambda-4}\leq\int_1^{m+1} x^{4\lambda-4}\,dx\leq C_\lambda\,m^{4\lambda-3}.
\]
In each case, combining the sum estimate with the constant contribution from $l=0$ gives \eqref{Eq:upper}.
\end{proof}

We are now in a position to state the main result, which combines the eigenvalue bounds on $\mathrm{Gr}$ (Lemmas \ref{lem:lower}--\ref{lem:upper}) with the singular value reduction (Proposition \ref{prop:chirp} and Lemma \ref{lem:sv_bounds}).

\begin{theorem}[Condition number of $W_\alpha$]\label{thm:condW}
Let $\lambda\geq\frac{1}{2}$ and let $N\gg m$ as in Lemma \ref{lem:sv_bounds}. Then,
\begin{enumerate}[\upshape(i)]
\item If $\frac{1}{2}\leq\lambda<1$, $h_l^{\lambda}$ is strictly decreasing (Lemma~\ref{lem:mono}), so $\lambda_{\min}(\mathrm{Gr})\geq h_m^{\lambda}$, where $h_m^{\lambda}\to 0$ as $m\to\infty$. By \eqref{Eq:cond_bound},
\[
\kappa^2(W_\alpha)\leq\frac{1}{1-\delta}\cdot\frac{C_{\lambda}\,m^{2\lambda-1}}{h_m^{\lambda}}.
\]
Since $h_m^{\lambda}\to 0$, the condition number deteriorates as $m$ grows.

\item If $\lambda=1$, $h_l^{1}=\frac{\pi}{2}$ for all $l$ (Lemma \ref{lem:mono}), so 
$\lambda_{\min}(\mathrm{Gr})\geq\frac{\pi}{2}$. By \eqref{Eq:cond_bound},
\[
\kappa^2(W_\alpha)\leq\frac{1}{1-\delta}\cdot\frac{C_{\lambda}\,m\ln m}{\pi/2}=\frac{2C_{\lambda}}{(1-\delta)\pi}\,m\ln m.
\]
The denominator is a positive constant independent of $m$.

\item If $\lambda>1$, $h_l^{\lambda}$ is strictly increasing (Lemma \ref{lem:mono}), so $\lambda_{\min}(\mathrm{Gr})\geq h_0^{\lambda}>0$. By \eqref{Eq:cond_bound},
\[
\kappa^2(W_\alpha)\leq\frac{1}{1-\delta}\cdot\frac{C_{\lambda}\,m^{4\lambda-3}}{h_0^{\lambda}}.
\]
The denominator is a positive constant, but the numerator grows as $m^{4\lambda-3}$ with $4\lambda-3>1$.
\end{enumerate}
In all three cases, the bounds are independent of $\alpha$.
\end{theorem}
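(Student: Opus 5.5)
The theorem follows by combining the earlier results; no new analytic ingredient is needed. The starting point is the condition-number bound \eqref{Eq:cond_bound} of Lemma~\ref{lem:sv_bounds}. Under the hypothesis $N\gg m$, the tail satisfies $\|T_N\|\leq\delta\,\lambda_{\min}(\mathrm{Gr})$. The lower bound in \eqref{Eq:sv_bounds} then gives
\[
\sigma_{\min}^2(W_\alpha)\geq\tfrac12(1-\delta)\lambda_{\min}(\mathrm{Gr}),
\]
and the upper bound gives $\sigma_{\max}^2(W_\alpha)\leq\tfrac12\lambda_{\max}(\mathrm{Gr})$. So $\kappa^2(W_\alpha)\leq(1-\delta)^{-1}\lambda_{\max}(\mathrm{Gr})/\lambda_{\min}(\mathrm{Gr})$. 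The task therefore reduces to bounding the numerator and denominator separately in each regime of $\lambda$.

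For the denominator, I apply Lemma~\ref{lem:lower}, which is valid because $\lambda\geq\frac12$, to get $\lambda_{\min}(\mathrm{Gr})\geq\min_{0\le l\le m}h_l^\lambda$. Lemma~\ref{lem:mono} then identifies this minimum in each case.
\begin{enumerate}[(i)]
\item For $\frac12\le\lambda<1$ the sequence $h_l^\lambda$ is decreasing, so the minimum is $h_m^\lambda$.
\item For $\lambda=1$ the sequence is constant, so the minimum is $\pi/2$.
\item For $\lambda>1$ the sequence is increasing, so the minimum is $h_0^\lambda=\sqrt{\pi}\,\Gamma(\lambda+\frac12)/\Gamma(\lambda+1)>0$.
\end{enumerate}
In case (i) I also record that $h_m^\lambda\to0$. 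From \eqref{Eq:h_l_lambda} and $C_m^\lambda(1)\sim m^{2\lambda-1}/\Gamma(2\lambda)$, we get $h_m^\lambda\asymp m^{2\lambda-2}$. This explains the deterioration claim, although it is only needed qualitatively.

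For the numerator, I insert the three-case bound on $\lambda_{\max}(\mathrm{Gr})$ from Lemma~\ref{lem:upper} directly. That gives $C_\lambda m^{2\lambda-1}$ in case (i), $C_\lambda m\ln m$ in case (ii), and $C_\lambda m^{4\lambda-3}$ in case (iii). Dividing by the corresponding denominators and multiplying by $(1-\delta)^{-1}$ yields the three displayed bounds. In case (iii), $\lambda>1$ immediately gives $4\lambda-3>1$.

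The $\alpha$-independence requires one check: every quantity in the final bounds must be $\alpha$-free. The quantities $\mathrm{Gr}$, $h_l^\lambda$ and $C_\lambda$ are $\alpha$-free by Proposition~\ref{prop:chirp} and by their definitions. The only $\alpha$-dependent object is $T_N$, because the $d_l$ contain the chirp $\Phi$. It enters only through the requirement $\|T_N\|\le\delta\lambda_{\min}(\mathrm{Gr})$.

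I expect this requirement to be the main obstacle. Choosing the threshold $N$ uniformly in $\alpha$ is delicate, because the Fourier tails of $\Phi C_l^\lambda$ decay more slowly as $\cot\alpha$ grows. I will read the theorem as stating that the bound itself is $\alpha$-independent once the hypothesis of Lemma~\ref{lem:sv_bounds} holds. If uniformity is wanted, I would first try the following: for $\alpha$ in a compact subset of $(0,\pi/2)$, integrate by parts to bound $|c_k(d_l)|$ by a constant depending on $\cot\alpha$ divided by $|k|$, taking the boundary terms into account. This gives a uniform tail bound and hence an $N$ that works for the whole compact set of angles.
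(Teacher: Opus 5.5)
Your proposal is the paper's proof. You substitute the bound on $\lambda_{\max}(\mathrm{Gr})$ from Lemma~\ref{lem:upper} and $\lambda_{\min}(\mathrm{Gr})\geq\min_l h_l^{\lambda}$ from Lemma~\ref{lem:lower} into \eqref{Eq:cond_bound}, with the minimising index identified by Lemma~\ref{lem:mono}, and take $\alpha$-independence from Proposition~\ref{prop:chirp}. Your further remark is correct and is a point the paper's one-line justification passes over: $\alpha$ still enters through $T_N$, so what is $\alpha$-free is the bound, not the required threshold on $N$, and your integration-by-parts $O(1/|k|)$ estimate makes that threshold uniform only where $\cot\alpha$ stays bounded.
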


\begin{proof}
Each case follows by substituting the corresponding upper bound on $\lambda_{\max}(\mathrm{Gr})$ (Lemma \ref{lem:upper}) and $\lambda_{\min}(\mathrm{Gr})\geq\min_l h_l^{\lambda}$ (Lemma \ref{lem:lower}) into \eqref{Eq:cond_bound}, with $\min_l h_l^{\lambda}$ determined by Lemma \ref{lem:mono}. Independence of $\alpha$ follows from the fact that $\mathrm{Gr}$ does not depend on $\alpha$ (Proposition \ref{prop:chirp}).
\end{proof}

The growth of $\kappa(W_\alpha)$ with $m$ in Theorem \ref{thm:condW} depends on the range of $\lambda$. When $\lambda<1$, the lower bound $\lambda_{\min}(\mathrm{Gr})\geq h_m^{\lambda}\to 0$ degenerates, causing $\kappa(W_\alpha)$ to deteriorate. When $\lambda\geq 1$, the lower bound remains positive, while the upper bound on $\lambda_{\max}(\mathrm{Gr})$ grows as $m\ln m$ at $\lambda=1$ and as $m^{4\lambda-3}$ for $\lambda>1$. A favorable choice of $\lambda$ must therefore balance these two effects. The numerical experiments in Section \ref{subsec:condition} identify this balance 
point at $\lambda\approx 0.75$, for which $\kappa(W_\alpha)$ grows extremely slowly with $m$. Moreover, the condition number is insensitive to the transform angle $\alpha$, consistent with the $\alpha$-independence established in Proposition \ref{prop:chirp}. Based on this analysis, we adopt $\lambda=0.75$ throughout the remainder of this paper. In applications where $\lambda$ is constrained by problem-specific considerations and cannot be freely chosen, the resulting ill-conditioning can be mitigated by standard regularization techniques such as Tikhonov regularization \cite{willoughby1979}, at the cost of introducing a regularization bias.

\subsection{Error estimate}\label{subsec:error}

We derive an $L^{\infty}$ error estimate for the fractional IPRM approximation $f_{m,\alpha}$ defined in \eqref{Eq:FrF_Gegenbauer}. The classical IPRM achieves spectral convergence for analytic functions \cite{shizgal2003,jung2004}; here we give an explicit error bound for the fractional setting by combining the conditioning analysis of $W_\alpha$ from Section \ref{subsec:solvability} with classical estimates on Gegenbauer coefficient decay. For piecewise analytic functions with finitely many discontinuities in $(-1,1)$, the method is applied to each smooth subinterval separately after an affine change of variables to $[-1,1]$.

The error arises from two sources: the truncation of the Gegenbauer expansion at degree $m$, and the mismatch between the computed coefficients $\hat{g}_l$ and the exact coefficients $g_l$ due to the finite Fourier truncation at order $N$. The 
following theorem quantifies both contributions.

\begin{theorem}[Error estimate]\label{thm:error}
Let $f$ be analytic on $[-1,1]$ and $\lambda\geq\frac{1}{2}$. Let $f_{m,\alpha}=\sum_{l=0}^{m}\hat{g}_l C_l^{(\lambda)}$ be the approximation obtained by solving $W_\alpha\hat{G}=C_\alpha$. Then there exist constants $C_1,C_2>0$ and $\sigma>0$, depending on $f$ and $\lambda$, such that
\begin{equation}\label{Eq:error}
\|f-f_{m,\alpha}\|_{L^{\infty}[-1,1]}\leq C_1\Psi_m^{(\lambda)}\,e^{-\sigma m}+C_2\Phi_m^{(\lambda)}\,\Psi_m^{(\lambda)}\sqrt{m+1}\,e^{-\sigma m},
\end{equation}
where
\[
\Psi_m^{(\lambda)}=\sum_{l=0}^{m}\frac{\bigl[C_l^{(\lambda)}(1)\bigr]^2}{h_l^{\lambda}},\qquad \Phi_m^{(\lambda)}=\biggl(\sum_{l=0}^{m}\bigl[C_l^{(\lambda)}(1)\bigr]^2\biggr)^{1/2}.
\]
\end{theorem}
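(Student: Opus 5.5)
The plan is to split the error with the triangle inequality, using the exact truncated Gegenbauer expansion $f_m=\sum_{l=0}^m g_l C_l^{(\lambda)}$ from \eqref{Eq:Gegenbauer_approx} as the intermediate term:
\[
\|f-f_{m,\alpha}\|_{L^\infty}\le \|f-f_m\|_{L^\infty}+\|f_m-f_{m,\alpha}\|_{L^\infty}.
\]
The first piece is the regularization (truncation) error, and the second comes from the coefficient mismatch $g_l-\hat g_l$. Throughout, I will use $|C_l^{(\lambda)}(x)|\le C_l^{(\lambda)}(1)$ on $[-1,1]$, valid for $\lambda>0$, to turn polynomial sums into sums of boundary values.

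\textbf{Regularization error.} The classical coefficient decay for analytic $f$ (Gottlieb--Shu \cite{gottlieb1997}) gives $|g_l|\le K\,e^{-\sigma l}$, up to the normalisation by $h_l^\lambda$. I plan to apply it in the form $|g_l|\le K\,C_l^{(\lambda)}(1)\,e^{-\sigma l}/h_l^{\lambda}$, obtained from \eqref{Eq:gl} through the Rodrigues formula and Cauchy estimates on an ellipse of analyticity. Then
\[
\|f-f_m\|_\infty\le\sum_{l>m}|g_l|\,C_l^{(\lambda)}(1).
\]
To express this in the stated form, I would bound the tail by $e^{-\sigma m}$ times a sum of the $[C_l^{(\lambda)}(1)]^2/h_l^\lambda$, absorbing geometric tail factors into $C_1$ and shrinking $\sigma$ slightly. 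This yields $C_1\Psi_m^{(\lambda)}e^{-\sigma m}$.

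\textbf{Coefficient mismatch.} Since $W_\alpha$ is linear and $c_{k,\alpha}=\langle f,\phi_{k,\alpha}\rangle$, the exact coefficients satisfy $W_\alpha G=C_\alpha-R$, where $R_k=\langle f-f_m,\phi_{k,\alpha}\rangle$. Hence $W_\alpha(\hat G-G)=R$. Because $|\phi_{k,\alpha}|=1$, Bessel's inequality for the orthonormal system $\{e_k\}$ applied to $\Phi(f-f_m)$ gives
\[
\|R\|_2\le\tfrac{1}{\sqrt2}\|f-f_m\|_{L^2}\le\|f-f_m\|_{L^\infty}.
\]
Lemma~\ref{lem:sv_bounds} together with Lemmas~\ref{lem:lower}--\ref{lem:mono} then gives
\[
\|\hat G-G\|_2\le\sigma_{\min}(W_\alpha)^{-1}\|R\|_2\le\bigl[\tfrac{1-\delta}{2}\min_l h_l^\lambda\bigr]^{-1/2}\|R\|_2 ,
\]
and this bound is $\alpha$-independent. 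Finally, by Cauchy--Schwarz,
\[
\|f_m-f_{m,\alpha}\|_\infty\le\sum_{l=0}^m|g_l-\hat g_l|\,C_l^{(\lambda)}(1)\le\Phi_m^{(\lambda)}\|G-\hat G\|_2 .
\]
Inserting the regularization bound for $\|R\|_2$ produces $\Phi_m\Psi_m e^{-\sigma m}$. The $\sqrt{m+1}$ factor arises if I bound $\|R\|_2$ componentwise, via $\|\cdot\|_2\le\sqrt{m+1}\,\|\cdot\|_\infty$ in coefficient space, or equivalently when passing between the norms. The $1/\sqrt{\min_l h_l^\lambda}$ factor is absorbed into $C_2$, which is legitimate for $\lambda\ge1$. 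For $\tfrac12\le\lambda<1$ it has polynomial growth $h_m^{-1/2}$, which is swallowed by slightly reducing $\sigma$.

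\textbf{Main obstacle.} The delicate step is the regularization estimate. It requires matching the classical analytic-coefficient bound to exactly the weight $[C_l(1)]^2/h_l$ appearing in $\Psi_m$, and being honest that it is the tail beyond $m$, not the partial sum $\Psi_m$ itself, that carries the decay. I would first try to prove $\sum_{l>m}|g_l|C_l(1)\le C_1\Psi_m e^{-\sigma m}$ directly. To do so, I would use the ratio of consecutive terms $C_{l+1}(1)^2h_l/(C_l(1)^2h_{l+1})$, which is polynomially bounded, so that the geometric decay $e^{-\sigma l}$ with a slightly smaller $\sigma$ dominates. A secondary point is that Lemma~\ref{lem:sv_bounds} needs $N$ large relative to $m$; I would state this as a standing assumption, since the constants depend only on $f$ and $\lambda$ through $\delta$.
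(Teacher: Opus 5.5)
Your argument is correct, up to the two tidy-ups below. It has the same skeleton as the paper's proof: the triangle inequality through $S_mf$, the residual identity for $\hat G-G$, division by $\sigma_{\min}(W_\alpha)$, and Cauchy--Schwarz producing $\Phi_m^{(\lambda)}$. Three sub-steps are done differently.

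\textbf{Truncation error.} The paper never uses coefficient decay for $E_1$. It writes $f-S_mf=(f-p^*)-S_m(f-p^*)$ and bounds the Lebesgue constant by $\Lambda_m^{(\lambda)}\le h_0^{\lambda}\Psi_m^{(\lambda)}$, using $|g_l|\le h_0^{\lambda}C_l^{(\lambda)}(1)\|f\|_{\infty}/h_l^{\lambda}$. It then takes $p^*$ to be the Chebyshev projection, so $\Psi_m^{(\lambda)}$ enters naturally as a Lebesgue-constant bound. Your tail sum instead gives the slightly sharper $E_1\le C\,[C_m^{(\lambda)}(1)]^2e^{-\sigma m}/h_m^{\lambda}$: the ratio of consecutive terms tends to $1$, so geometric decay dominates and the tail is comparable to its first term. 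You then bound this by $\Psi_m^{(\lambda)}$.

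\textbf{Residual.} The paper bounds each entry by $E_1$ and gets $\|\tau_m\|\le\sqrt{2N+1}\,E_1$. It then replaces $2N+1$ by $m+1$, which holds only in the square case $m=2N$. That contradicts the regime $N\gg m$ required by the conditioning analysis and used in the experiments with $N=10m$. Your Bessel bound $\|R\|_2\le\|f-S_mf\|_{\infty}$ is independent of $N$ and is the better argument.

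\textbf{Constants.} The paper sets $C_2=C_1/\sigma_{\min}(W_\alpha)$, which silently depends on $m$, $N$ and, through $T_N$, on $\alpha$. Your explicit use of Lemmas~\ref{lem:sv_bounds}--\ref{lem:mono} is what makes $C_2$ depend only on $f$, $\lambda$ and $\delta$. The price is the standing hypothesis that $N$ is large relative to $m$. The paper needs this hypothesis just as much if $C_2$ is really to depend only on $f$ and $\lambda$, but does not state it.

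\textbf{First tidy-up: the coefficient bound.} Justify $|g_l|\le K\,C_l^{(\lambda)}(1)\rho^{-l}/h_l^{\lambda}$ with the paper's own device rather than Rodrigues plus Cauchy estimates. Since $C_l^{(\lambda)}$ is weighted-orthogonal to polynomials of degree below $l$, you get $|g_l|\le h_0^{\lambda}C_l^{(\lambda)}(1)\,\|f-p^*_{l-1}\|_{\infty}/h_l^{\lambda}$, and Bernstein's theorem finishes this for every analytic $f$. The Rodrigues route bounds $\int|f^{(l)}|(1-x^2)^{l+\lambda-1/2}\,dx$ in absolute value. It gives geometric decay only when the singularities are far enough from $[-1,1]$. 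For $1/(1+4t^2)$ (the middle piece of $f_6$ after rescaling) it gives no geometric decay at all, although $\rho=(1+\sqrt5)/2$.

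\textbf{Second tidy-up: the factor $\sqrt{m+1}$.} Your account of this factor is off: $R$ lies in $\mathbb{C}^{2N+1}$, so a componentwise bound would give $\sqrt{2N+1}$. You do not need that route at all. After Bessel, $\sigma_{\min}(W_\alpha)^{-1}$ is bounded for $\lambda\ge1$. For $\frac12\le\lambda<1$, note that $C_m^{(\lambda)}(1)=\Gamma(m+2\lambda)/(m!\,\Gamma(2\lambda))\ge1$, so $h_m^{\lambda}\ge c_\lambda/(m+1)$ and hence $\sigma_{\min}(W_\alpha)^{-1}\le C\sqrt{m+1}$. So the $\sqrt{m+1}$ in the statement exactly absorbs the conditioning loss, and no reduction of $\sigma$ is needed.

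In the overdetermined case, the least-squares solution satisfies $\hat G-G=W_\alpha^{+}R$, and the same bound holds.
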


\begin{proof}
By the triangle inequality,
\[
\|f-f_{m,\alpha}\|_{L^{\infty}}
\leq\underbrace{\|f-S_m f\|_{L^{\infty}}}_{E_1}
+\underbrace{\|S_m f-f_{m,\alpha}\|_{L^{\infty}}}_{E_2},
\]
where $S_m f=\sum_{l=0}^{m}g_l C_l^{(\lambda)}$ is the truncated expansion with 
coefficients $g_l$ defined in \eqref{Eq:gl}.

\medskip\noindent\textbf{Truncation error $E_1$: }For any polynomial $p^*=\sum_{l=0}^m a_l C_l^{(\lambda)}$ of degree at most $m$, the orthogonality of $C_l^{(\lambda)}$ gives $S_m p^*=p^*$. Hence
\[
f-S_m f=(f-p^*)-S_m(f-p^*).
\]
Taking the $L^{\infty}$ norm gives
\[
E_1\leq(1+\Lambda_m^{(\lambda)})
\|f-p^*\|_{L^{\infty}},
\]
where $\Lambda_m^{(\lambda)}:=\sup_{\|f\|_{L^{\infty}}\leq 1}\|S_m f\|_{L^{\infty}}$. To bound $\Lambda_m^{(\lambda)}$, note that for $\|f\|_{L^{\infty}}\leq 1$, each Gegenbauer coefficient satisfies
\[
|g_l|=\frac{1}{h_l^{\lambda}}\biggl|\int_{-1}^1 f(t)\,C_l^{(\lambda)}(t)\,(1-t^2)^{\lambda-1/2}\,dt\biggr|\leq\frac{C_l^{(\lambda)}(1)}{h_l^{\lambda}}\int_{-1}^1(1-t^2)^{\lambda-1/2}\,dt=\frac{h_0^{\lambda}\,C_l^{(\lambda)}(1)}{h_l^{\lambda}},
\]
where we used $|C_l^{(\lambda)}(t)|\leq C_l^{(\lambda)}(1)$ \cite{szeg1939} and 
$h_0^{\lambda}=\int_{-1}^1 (1-t^2)^{\lambda-1/2}\,dt$. Since $|S_m f(x)|\leq\sum_{l=0}^m|g_l|\,C_l^{(\lambda)}(1)$, it follows that
\[
\Lambda_m^{(\lambda)}\leq h_0^{\lambda}\sum_{l=0}^{m}\frac{[C_l^{(\lambda)}(1)]^2}{h_l^{\lambda}}=:h_0^{\lambda}\,\Psi_m^{(\lambda)},\qquad \Psi_m^{(\lambda)}=\sum_{l=0}^{m}\frac{\bigl[C_l^{(\lambda)}(1)\bigr]^2}{h_l^{\lambda}}.
\]
If we choose $p^*$ to be the Chebyshev projection of $f$, \cite[Theorem 8.2]{trefethen2019} gives $\|f-p^*\|_{L^{\infty}}\leq 2M\rho^{-m}/(\rho-1)$, where $\rho>1$ characterizes the analyticity region of $f$ and $M>0$ is a constant depending on $f$. Therefore
\[
E_1\leq\frac{2M(1+h_0^{\lambda}\,\Psi_m^{(\lambda)})}{\rho-1}\,\rho^{-m}.
\]
Since $\Psi_m^{(\lambda)}\geq [C_0^{(\lambda)}(1)]^2/h_0^{\lambda}=1/h_0^{\lambda}>0$, we have $1\leq h_0^{\lambda}\,\Psi_m^{(\lambda)}$, and hence $1+h_0^{\lambda}\,\Psi_m^{(\lambda)}\leq 2h_0^{\lambda}\,\Psi_m^{(\lambda)}$. Setting $\sigma=\ln\rho$ and 
$C_1=4M h_0^{\lambda}/(\rho-1)$, the truncation error becomes
\begin{equation}\label{Eq:E1}
E_1\leq C_1 \Psi_m^{(\lambda)}\,e^{-\sigma m}.
\end{equation}

\medskip\noindent\textbf{Coefficient error $E_2$: }Let $G_m=(g_0,\ldots,g_m)^T$ denote the vector of exact Gegenbauer coefficients. The exact fractional Fourier coefficients satisfy
\[
C_\alpha=W_\alpha G_m+\tau_m,
\]
where $\tau_m\in\mathbb{C}^{2N+1}$ with entries $(\tau_m)_k=\sum_{l=m+1}^{\infty}g_l\,W_{k,l,\alpha}$ accounts for the contribution of the tail $l>m$. Since $W_\alpha\hat{G}=C_\alpha$, subtraction gives
\[
W_\alpha(\hat{G}-G_m)=\tau_m,
\]
and therefore
\begin{equation}\label{Eq:coeff_err}
\|\hat{G}-G_m\|\leq\frac{\|\tau_m\|}{\sigma_{\min}(W_\alpha)}.
\end{equation}
It remains to estimate $\|\tau_m\|$. Since $|\phi_{k,\alpha}(x)|=1$ for all $x\in[-1,1]$, we have
\[
|(\tau_m)_k|=\frac{1}{2}\biggl|\int_{-1}^1(f(x)-S_m f(x))\,\overline{\phi_{k,\alpha}(x)}\,dx\biggr|\leq\frac{1}{2}\int_{-1}^1 |f(x)-S_m f(x)|\,dx\leq\|f-S_m f\|_{L^{\infty}}=E_1.
\]
Hence $\|\tau_m\|^2=\sum_{k=-N}^{N}|(\tau_m)_k|^2\leq(2N+1)\,E_1^2=(m+1)\,E_1^2$, so
\begin{equation}\label{Eq:tau_bound}
\|\tau_m\|\leq\sqrt{m+1}\,E_1\leq C_1\sqrt{m+1}\,\Psi_m^{(\lambda)}\,e^{-\sigma m}.
\end{equation}
To convert this coefficient error to an $L^{\infty}$ function error, since $|C_l^{(\lambda)}(x)|\leq C_l^{(\lambda)}(1)$,
\[
E_2=\biggl\|\sum_{l=0}^{m}(\hat{g}_l-g_l)\,C_l^{(\lambda)}\biggr\|_{L^{\infty}}\leq\sum_{l=0}^m|\hat{g}_l-g_l|\cdot\|C_{l}^{(\lambda)}\|_{L^{\infty}}=\sum_{l=0}^m|\hat{g}_l-g_l|\cdot C_{l}^{(\lambda)}(1).
\]
Applying the Cauchy--Schwarz inequality, we obtain
\[
\sum_{l=0}^m|\hat{g}_l-g_l|\cdot C_{l}^{(\lambda)}(1)\leq \left(\sum_{l=0}^m[C_{l}^{(\lambda)}(1)]^2\right)^{1/2}\cdot\left(\sum_{l=0}^m|\hat{g}_l-g_l|^2\right)^{1/2}=\Phi_m^{(\lambda)}\|\hat{G}-G_m\|,
\]
where $\Phi_m^{(\lambda)}:=\biggl(\sum_{l=0}^{m}\bigl[C_l^{(\lambda)}(1)\bigr]^2\biggr)^{1/2}$. Substituting \eqref{Eq:coeff_err} and \eqref{Eq:tau_bound} gives
\[
E_2\leq\frac{\Phi_m^{(\lambda)}}{\sigma_{\min}(W_\alpha)}\cdot C_1\sqrt{m+1}\,\Lambda_m^{(\lambda)}\,e^{-\sigma m}\leq C_2\,\Phi_m^{(\lambda)}\,\Psi_m^{(\lambda)}\sqrt{m+1}\,e^{-\sigma m},
\]
where $C_2=C_1/\sigma_{\min}(W_\alpha)$. Combining $E_1$ and $E_2$ yields \eqref{Eq:error}.
\end{proof}

Theorem \ref{thm:error} shows that the fractional IPRM achieves exponential convergence for analytic functions. The error bound \eqref{Eq:error} consists 
of the exponential factor $e^{-\sigma m}$, whose rate $\sigma$ is determined solely by the analyticity of $f$, multiplied by growth factors that depend on $m$ and $\lambda$. Among these, $\sqrt{m+1}$ is evidently polynomial; $\Phi_m^{(\lambda)}$ is polynomial since $C_l^{(\lambda)}(1)=\Gamma(l+2\lambda)/(l!\,\Gamma(2\lambda))$ grows polynomially in $l$. Since polynomial growth is always dominated by exponential decay, the overall error decays exponentially in $m$. The choice of $\lambda$ does not affect the rate $\sigma$, but controls the size of the prefactors through $\Psi_m^{(\lambda)}$ and $\Phi_m^{(\lambda)}$. 

\section{Numerical Experiments}\label{sec:numerical}

\subsection{Discussion of the condition number of \texorpdfstring{$W_{\alpha}$}{}}\label{subsec:condition}

To complement the theoretical analysis in Theorem \ref{thm:condW}, we report numerical experiments on the condition number $\kappa(W_\alpha)$ and its constituent singular values $\sigma_{\max}(W_\alpha)$ and $\sigma_{\min}(W_\alpha)$ in the overdetermined regime $N\gg m$. The square case $m=2N$ has been studied in the classical IPRM literature \cite{jung2004} and is not repeated here. Throughout, the matrix $W_\alpha$ is computed by Gauss--Legendre quadrature with $N=10m$ Fourier modes, which is large enough to make the truncation error $\|T_N\|$ negligible relative to $\lambda_{\min}(\mathrm{Gr})$, in accordance with Lemma \ref{lem:sv_bounds}.

\begin{figure}[htbp]
\centering
\subfloat[$\kappa(W_\alpha)$ versus $m$ at $\alpha=\pi/4$]{\label{fig:cond_N}\includegraphics[width=0.45\textwidth,height=4.5cm]{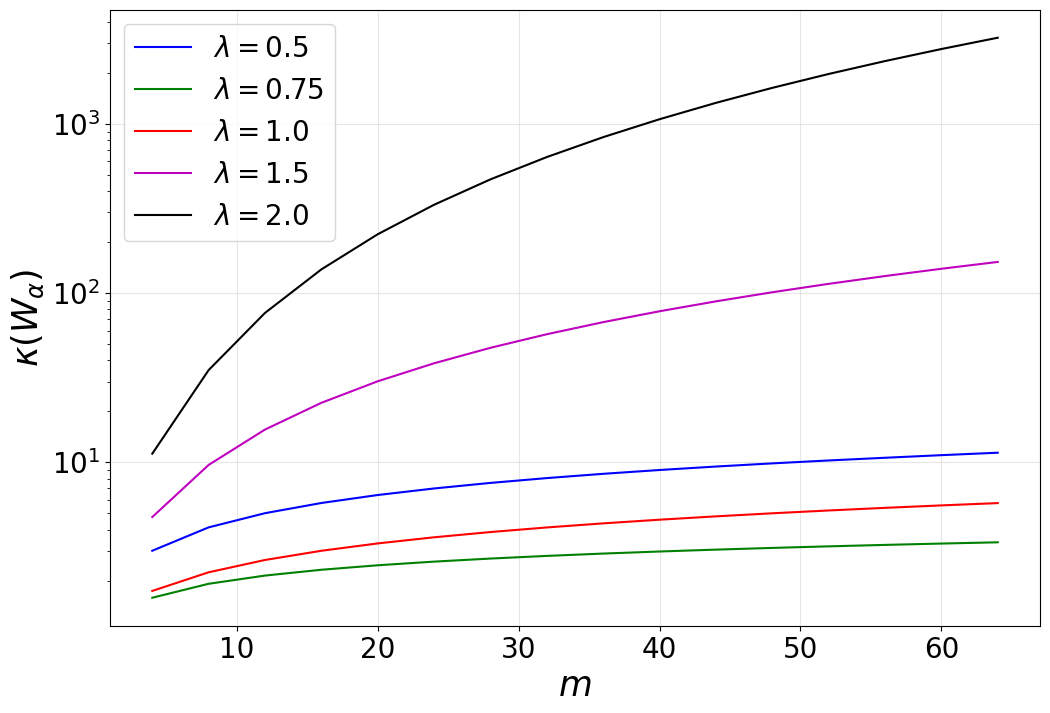}}
\hspace{1mm}
\subfloat[$\sigma_{\max}(W_\alpha)$ versus $m$ at $\alpha=\pi/4$]{\label{fig:sigma_max_N}\includegraphics[width=0.45\textwidth,height=4.5cm]{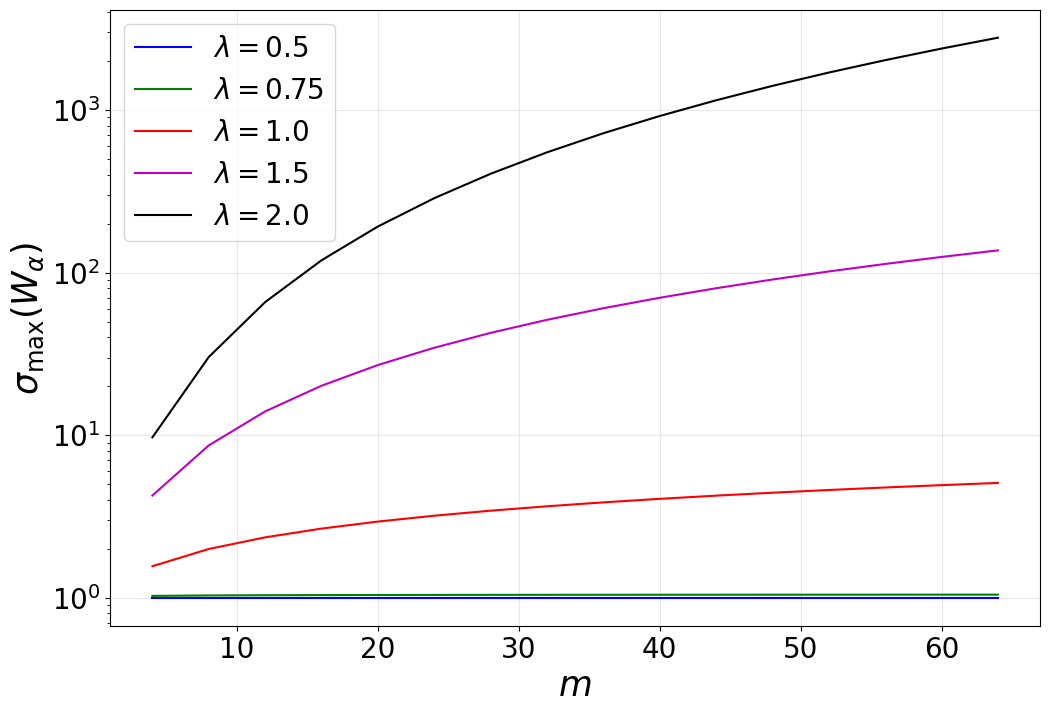}}
\hspace{1mm}
\subfloat[$\sigma_{\min}(W_\alpha)$ versus $m$ at $\alpha=\pi/4$]{\label{fig:sigma_min_N}\includegraphics[width=0.45\textwidth,height=4.5cm]{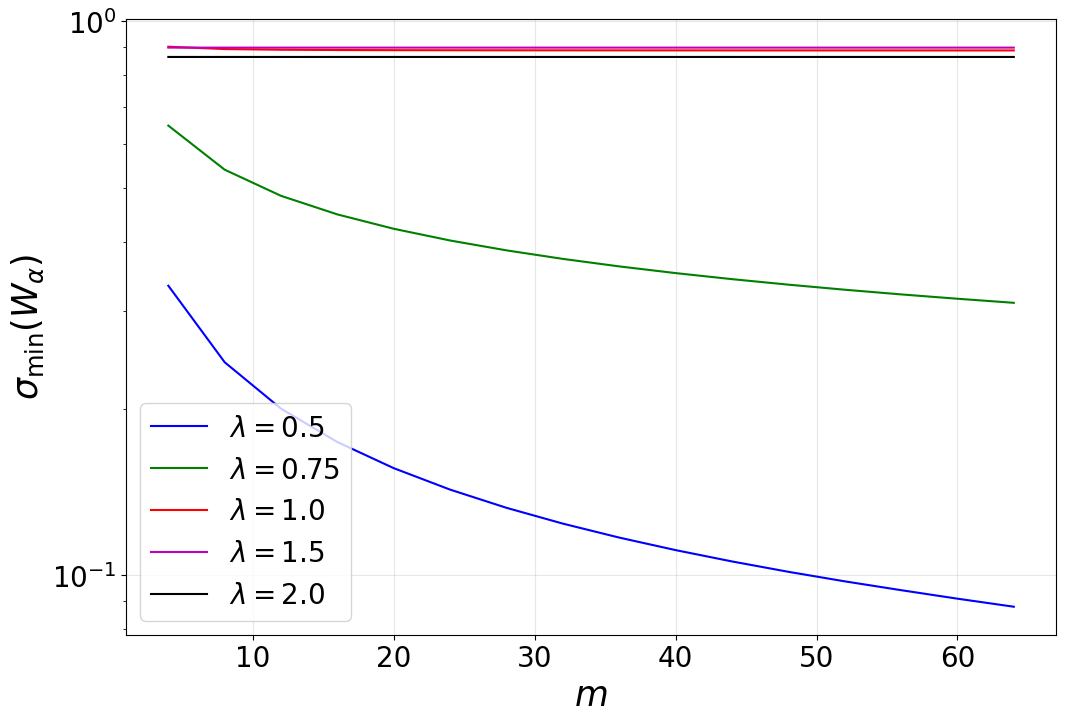}}
\hspace{1mm}
\subfloat[$\kappa(W_\alpha)$ versus $\alpha$ at $m=16$]{\label{fig:cond_alpha}\includegraphics[width=0.45\textwidth,height=4.5cm]{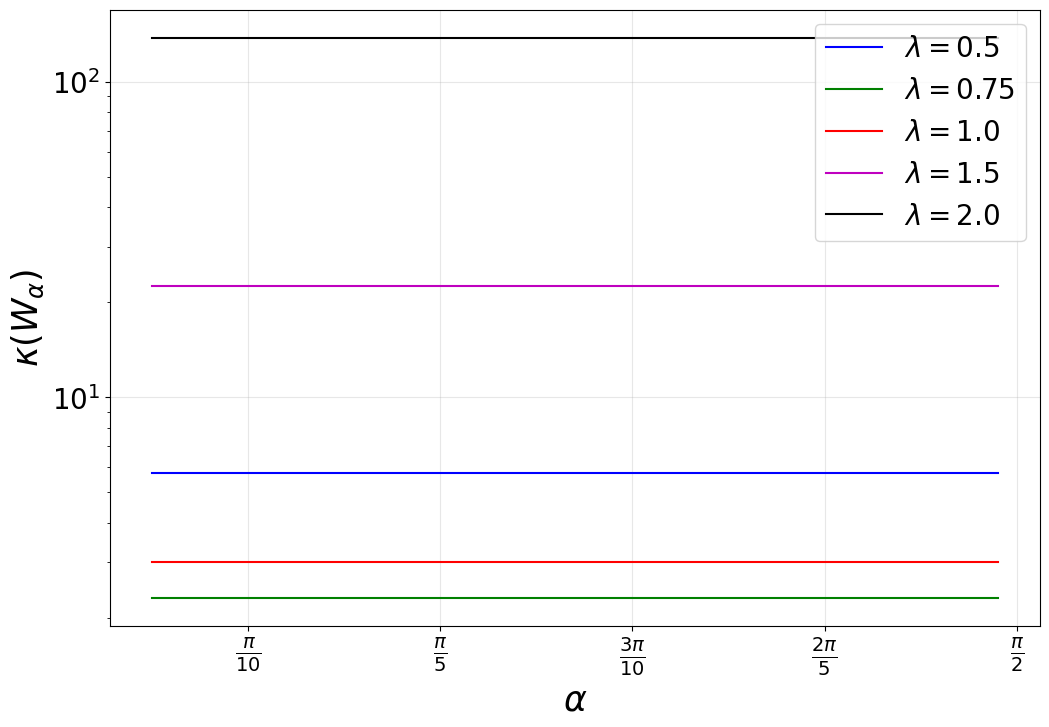}}
\hspace{1mm}
\caption{Conditioning of the coefficient matrix $W_\alpha$ for several values of the Gegenbauer parameter $\lambda$. (a) The condition number $\kappa(W_\alpha)$ as a function of the truncation level $m$. (b)--(c) The largest and smallest singular values $\sigma_{\max}(W_\alpha)$ and $\sigma_{\min}(W_\alpha)$, separating the two mechanisms that drive the growth of $\kappa(W_\alpha)$. (d) The condition number as a function of the transform angle $\alpha$, illustrating its $\alpha$-independence.}
\label{fig:cond}
\end{figure}

Figure \ref{fig:cond}\subref{fig:cond_N} shows $\kappa(W_\alpha)$ as a function of $m$ for several values of $\lambda$, with $\alpha=\pi/4$ fixed, and Figures \ref{fig:cond}\subref{fig:sigma_max_N} and \subref{fig:sigma_min_N} decompose this behavior into the contributions of $\sigma_{\max}(W_\alpha)$ and $\sigma_{\min}(W_\alpha)$ separately. The curves separate into two regimes that match the theoretical picture, with each regime driven by a different mechanism. For $\lambda<1$, the condition number grows as $m$ increases, and the decomposition shows that this growth is driven entirely by the collapse of $\sigma_{\min}(W_\alpha)$, while $\sigma_{\max}(W_\alpha)$ remains essentially flat. This reflects the degeneration of the lower bound $\lambda_{\min}(\mathrm{Gr})\geq h_m^{\lambda}\to 0$. For $\lambda\geq 1$, the situation is reversed: $\sigma_{\min}(W_\alpha)$ stays nearly constant, while $\sigma_{\max}(W_\alpha)$ grows rapidly, and the growth becomes increasingly steep as $\lambda$ increases. This reflects the growing upper bound on $\lambda_{\max}(\mathrm{Gr})$. Among the tested values, $\lambda=0.75$ sits at the crossover between these two regimes: both $\sigma_{\max}(W_\alpha)$ and $\sigma_{\min}(W_\alpha)$ vary mildly with $m$, producing the slowest overall growth of $\kappa(W_\alpha)$. This identifies $\lambda\approx 0.75$ as the empirical balance point between the two competing effects in Theorem \ref{thm:condW}.

Finally, Figure \ref{fig:cond}\subref{fig:cond_alpha} shows $\kappa(W_\alpha)$ as a function of the transform angle $\alpha\in(0,\pi/2)$ for fixed $m=16$. The curves are essentially horizontal for every value of $\lambda$, confirming that the condition number is insensitive to $\alpha$. This is consistent with the $\alpha$-independence of $\mathrm{Gr}$ established in Proposition \ref{prop:chirp}.

\subsection{Reconstruction of discontinuous functions}
\label{subsec:reconstruction}

We test the fractional IPRM on six piecewise analytic functions with jump discontinuities in $(-1,1)$, listed in Table \ref{tab:test_functions}. The functions are chosen to cover a range of features: single and multiple discontinuities, symmetric and asymmetric jumps, and varying degrees of regularity on the smooth subintervals. For each function, the discontinuity locations are assumed known, and the IPRM is applied separately on each smooth subinterval after an affine change of variables to $[-1,1]$. Throughout, we fix $\lambda=0.75$ and $N=10m$.

\begin{table}[htbp]
\centering
\caption{Test functions for the reconstruction experiments. All functions are defined on $[-1,1]$ and are analytic on each subinterval between consecutive discontinuities.}
\label{tab:test_functions}
\begin{tabular}{lcc}
\toprule
Definition & Discontinuities & Jump sizes \\
\midrule
$f_1(x):=\displaystyle\begin{cases}\dfrac{1}{1+25x^2}-1, & x<0 \\[6pt] \dfrac{1}{1+25x^2}+1, & x\geq 0 \end{cases}$ & $x=0$ & $2$ \\[10pt]
$f_2(x):=\displaystyle\begin{cases}\dfrac{1}{1+4x^2}, & x<0.3 \\[6pt] \dfrac{1}{1+4(x-0.3)^2}+1, & x\geq 0.3 \end{cases}$ & $x=0.3$ & $\approx 1.26$ \\[10pt]
$f_3(x):=\displaystyle\begin{cases}\dfrac{1}{1+16x^2}, & x<-\tfrac{1}{2} \\[6pt] \dfrac{1}{1+9x^2}+1, & -\tfrac{1}{2}\leq x<\tfrac{1}{2} \\[6pt] \dfrac{1}{1+16x^2}, & x\geq\tfrac{1}{2} \end{cases}$ & $x=\pm 1/2$ & $\approx 1.11,\,1.11$ \\[10pt]
$f_4(x):=\displaystyle\begin{cases}\tanh(10x), & x<0 \\ \tanh(10x)+2, & x\geq 0 \end{cases}$ & $x=0$ & $2$ \\[10pt]
$f_5(x):=\displaystyle\begin{cases}\tanh(6(x+\tfrac{1}{2}))-1, & x<-\tfrac{1}{2} \\ \tanh(4x)+1, & -\tfrac{1}{2}\leq x<\tfrac{1}{2} \\ \tanh(6(x-\tfrac{1}{2}))+1, & x\geq\tfrac{1}{2} \end{cases}$ & $x=\pm 1/2$ & $\approx 1.04,\,0.96$ \\[10pt]
$f_6(x):=\displaystyle\begin{cases}\tanh(8(x+\tfrac{1}{2})), & x<-\tfrac{1}{2} \\ \dfrac{1}{1+16x^2}, & -\tfrac{1}{2}\leq x<\tfrac{1}{2} \\ e^{-5(x-\frac{1}{2})}, & x\geq\tfrac{1}{2} \end{cases}$ & $x=\pm 1/2$ & $0.2,\,0.8$ \\
\bottomrule
\end{tabular}
\end{table}

Figure \ref{fig:reconstruction} displays the reconstruction results for all six test functions with $\alpha=\pi/4$ and $m=16$. The fractional Fourier partial sum $f_{N,\alpha}$ (blue dashed) exhibits pronounced Gibbs oscillations near every discontinuity. Two reconstruction methods are compared for resolving this phenomenon. The direct fractional Gegenbauer method (green dashed) reduces the oscillations but produces significant deviations from the exact function 
on the smooth subintervals, particularly for $f_1$, $f_2$, and $f_5$, where the reconstructed curve departs visibly from the true function values. This is because the direct method requires $\lambda$ to grow proportionally with $N$; when this proportionality constraint is not satisfied, the reconstruction inherits the Gibbs 
artifacts from the corrupted partial sum, as explained in Section \ref{subsec:direct}. The fractional IPRM (red dashed), by contrast, is visually indistinguishable from the exact function (black dashed) on each smooth subinterval, completely eliminating the Gibbs oscillations without introducing spurious deviations.

\begin{figure}[htbp]
\centering
\subfloat[Piecewise function: $f_1(x)$]{\label{fig:reconstruction_f1}\includegraphics[width=0.32\textwidth]{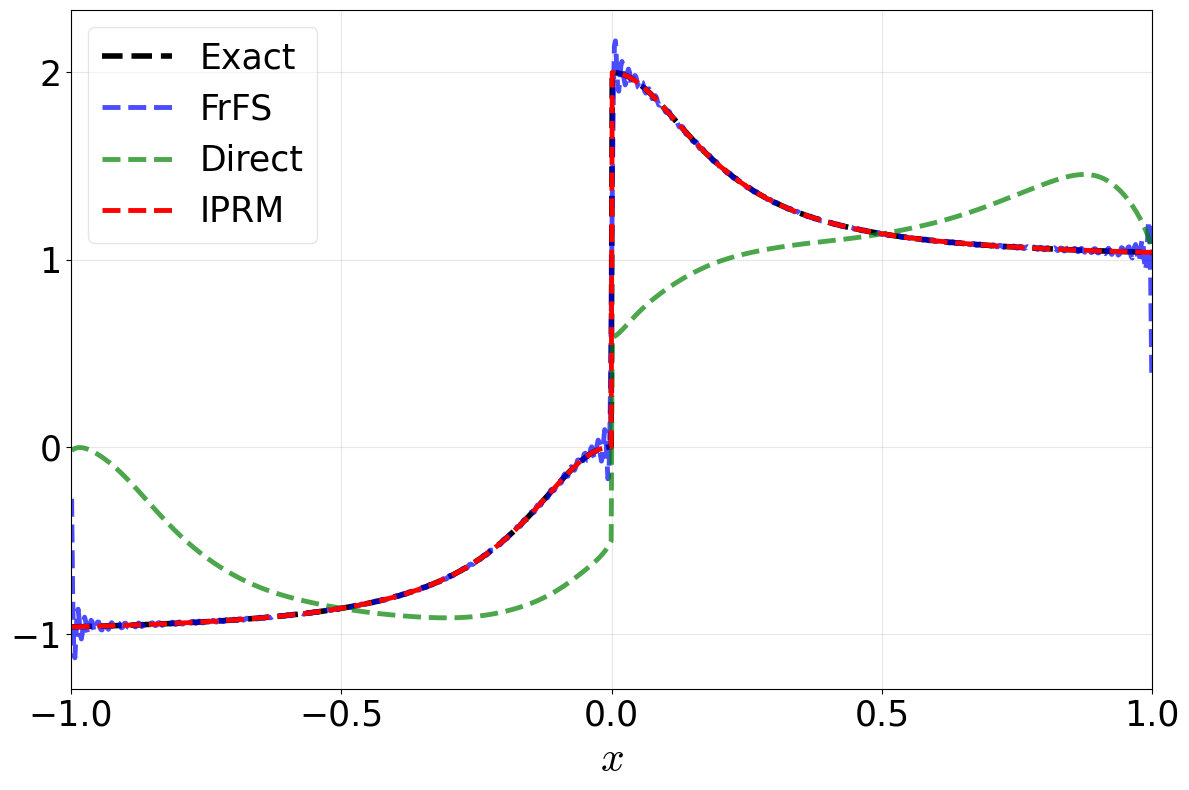}}
\hspace{1mm}
\subfloat[Asymmetric function: $f_2(x)$]{\label{fig:reconstruction_f2}\includegraphics[width=0.32\textwidth]{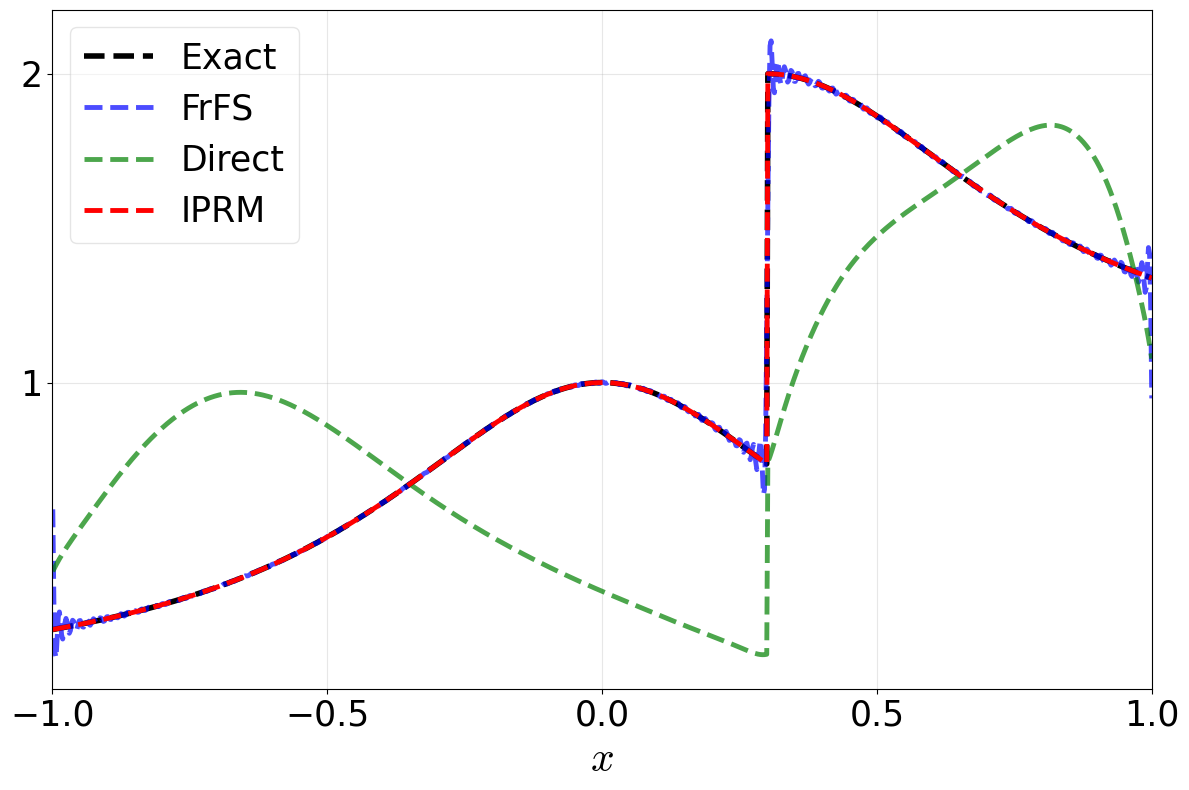}}
\hspace{1mm}
\subfloat[Two-jump function: $f_3(x)$]{\label{fig:reconstruction_f3}\includegraphics[width=0.32\textwidth]{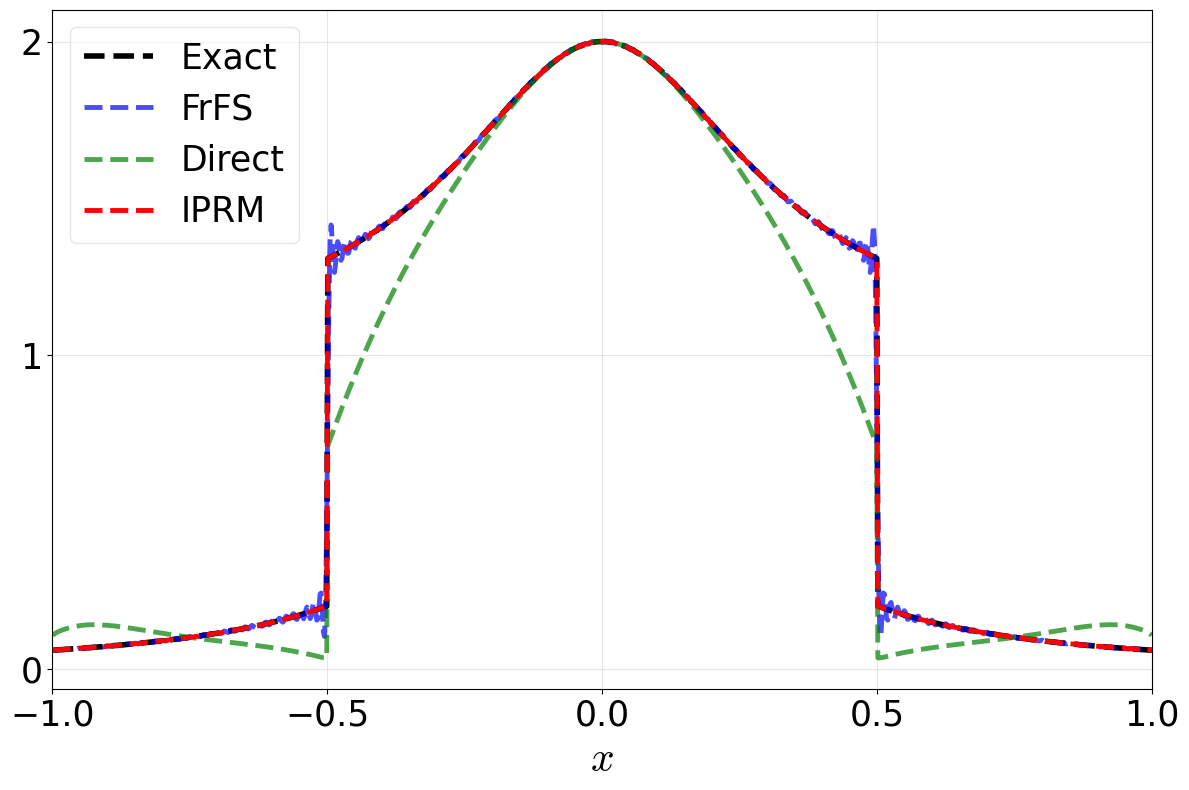}}
\hspace{1mm}
\subfloat[Piecewise tanh: $f_4(x)$]{\label{fig:reconstruction_f4}\includegraphics[width=0.32\textwidth]{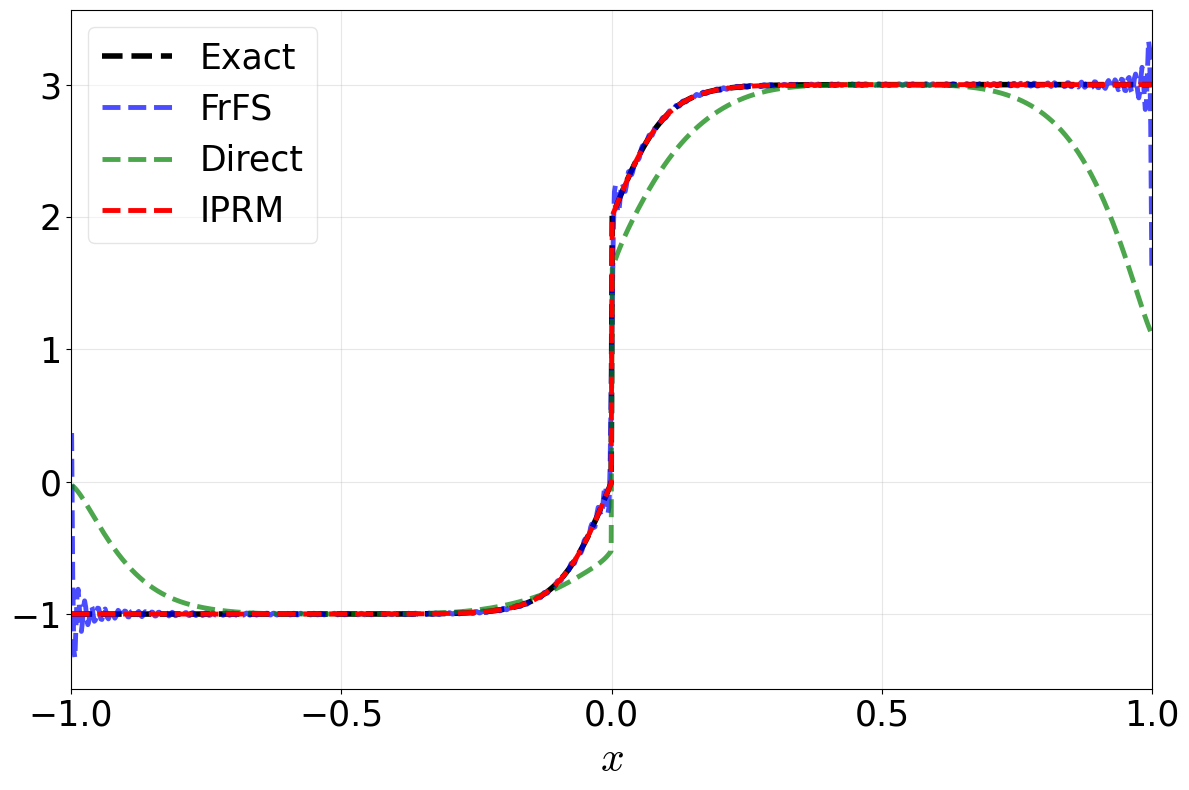}}
\hspace{1mm}
\subfloat[Two-jump tanh: $f_5(x)$]{\label{fig:reconstruction_f5}\includegraphics[width=0.32\textwidth]{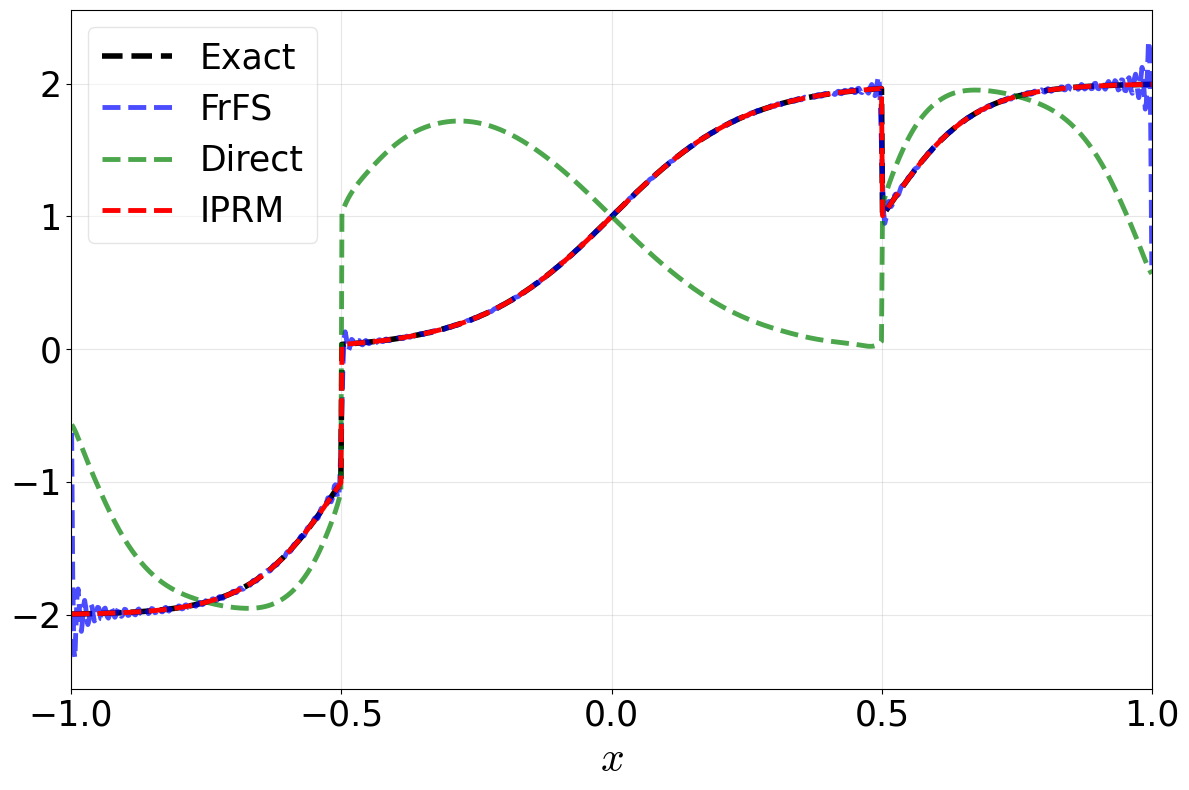}}
\hspace{1mm}
\subfloat[Three pieces: $f_6(x)$]{\label{fig:reconstruction_f6}\includegraphics[width=0.32\textwidth]{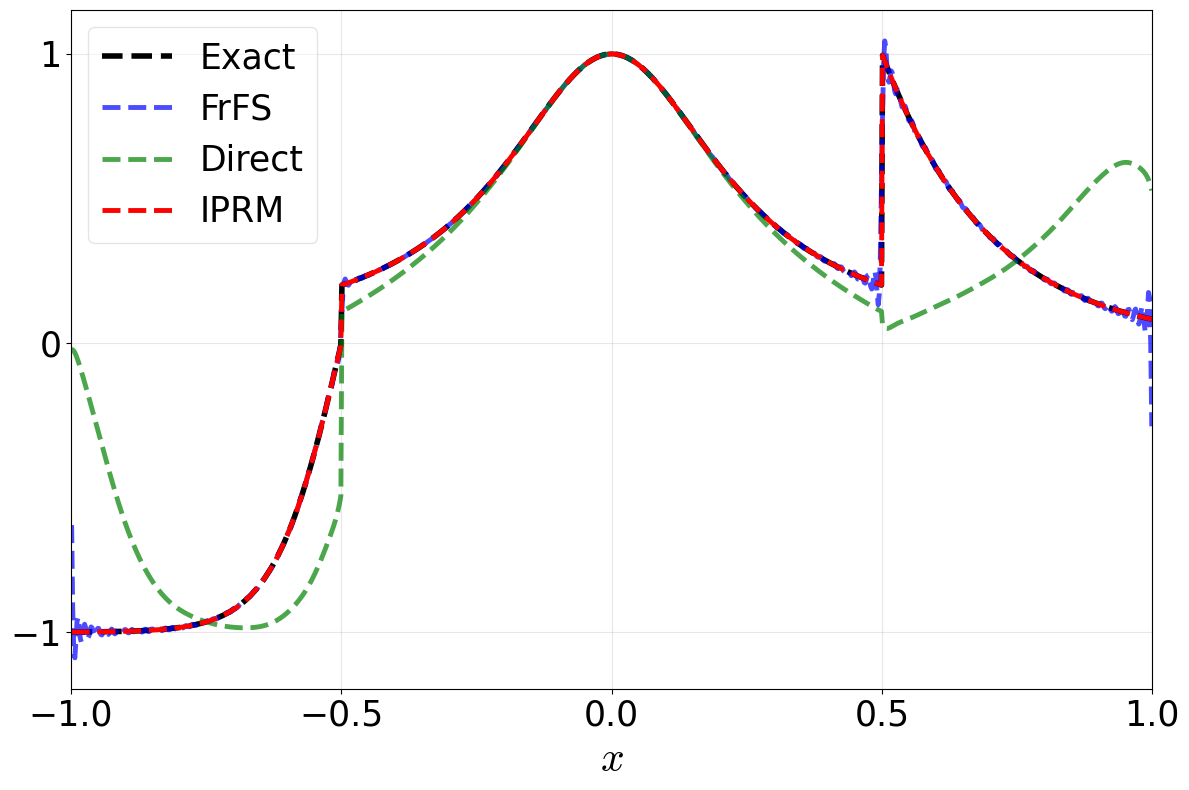}}
\caption{Reconstruction of six test functions with $\alpha=\pi/4$, $m=16$, and $\lambda=0.75$. The exact function (black dashed); fractional Fourier partial sum (blue dashed), exhibiting the Gibbs phenomenon near discontinuities; the Direct Gegenbauer method (green dashed); fractional IPRM reconstruction (red dashed), which eliminates the Gibbs oscillations and closely tracks the exact function on each smooth subinterval.}
\label{fig:reconstruction}
\end{figure}

These visual observations are confirmed quantitatively in Table \ref{tab:error_comparison}, which reports two error metrics for all three approaches at $m=16$: the relative $L^2$ error $\|f-f_{m,\alpha}\|_{L^2}/\|f\|_{L^2}$, which measures the overall reconstruction quality in an averaged sense, and the absolute $L^{\infty}$ error $\|f-f_{m,\alpha}\|_{L^{\infty}}$, which captures the worst-case pointwise deviation. For every test function, the IPRM achieves errors that are three to five orders of magnitude smaller than those of the FrFS and the direct method in both norms. The FrFS errors remain at the level of $10^{-2}$ in relative $L^2$ and $10^{-1}$ to $10^{0}$ in absolute $L^{\infty}$, reflecting the persistent 
Gibbs oscillations. The direct method performs comparably to or even worse than the FrFS, with $L^{\infty}$ errors exceeding $1$ for several functions, confirming that the reconstruction inherits the artifacts of the corrupted partial sum. By contrast, the IPRM reduces the $L^{\infty}$ error to $10^{-6}$--$10^{-4}$ across all test functions. To examine how this accuracy improves as the number of Gegenbauer coefficients increases, we next study the decay of the $L^{\infty}$ error as a function of $m$.

\begin{table}[htbp]
\caption{Relative $L^2$ and absolute $L^{\infty}$ errors of six test functions with $\alpha=\pi/4$, $m=16$ and $\lambda=0.75$. The best results are highlighted in bold.}\label{tab:error_comparison}
\centering
\setlength{\tabcolsep}{4.5mm}{
\renewcommand{\arraystretch}{1.2}
\begin{tabular}{cccc|ccc}
\toprule
\multirowcell{2}{Function} & \multicolumn{3}{c|}{Relative $L^2$ errors} & \multicolumn{3}{c}{Absolute $L^{\infty}$ errors} \\
\cline{2-7} & FrFS & Direct & IPRM & FrFS & Direct & IPRM \\
\hline
$f_1$ & 4.47e-2 & 4.46e-1 & {\bf 8.28e-6} & 6.84e-1 & 1.42e+0 & {\bf 5.38e-5}\\
$f_2$ & 2.53e-2 & 4.37e-1 & {\bf 1.25e-6} & 4.37e-1 & 1.26e+0 & {\bf 8.13e-6}\\
$f_3$ & 2.35e-2 & 1.35e-1 & {\bf 6.34e-6} & 4.77e-1 & 5.97e-1 & {\bf 3.35e-5}\\
$f_4$ & 3.49e-2 & 1.77e-1 & {\bf 1.43e-5} & 1.37e+0 & 1.89e+0 & {\bf 1.49e-4}\\
$f_5$ & 4.70e-2 & 6.58e-1 & {\bf 2.45e-6} & 1.36e+0 & 1.94e+0 & {\bf 2.21e-5}\\
$f_6$ & 3.61e-2 & 4.52e-1 & {\bf 1.33e-4} & 3.70e-1 & 9.77e-1 & {\bf 3.48e-4}\\
\bottomrule
\end{tabular}}
\end{table}

\subsection{Error decay and insensitivity to \texorpdfstring{$\alpha$}{}}\label{subsec:error_decay}

To verify the exponential convergence established in Theorem \ref{thm:error}, Figure \ref{fig:error_decay} shows the $L^{\infty}$ error of the IPRM as a function of the Gegenbauer truncation degree $m$, with $\alpha=\pi/4$ and $N=10m$. On the semi-logarithmic scale, all six test functions display approximately linear decay, consistent with the theoretical bound $e^{-\sigma m}$ where $\sigma=\ln\rho$. The slope of each curve reflects the Bernstein ellipse parameter $\rho$ of the corresponding function: a smaller $\rho$ (singularities closer to $[-1,1]$) produces a gentler slope. Among the six functions, $f_2$ converges fastest and $f_4$, $f_6$ converge slowest, in agreement with their respective $\rho$ values listed in Table \ref{tab:error_decay}.

\begin{figure}[htbp]
\centering
\subfloat[Piecewise function: $f_1(x)$]{\label{fig:error_decay_f1}\includegraphics[width=0.32\textwidth]{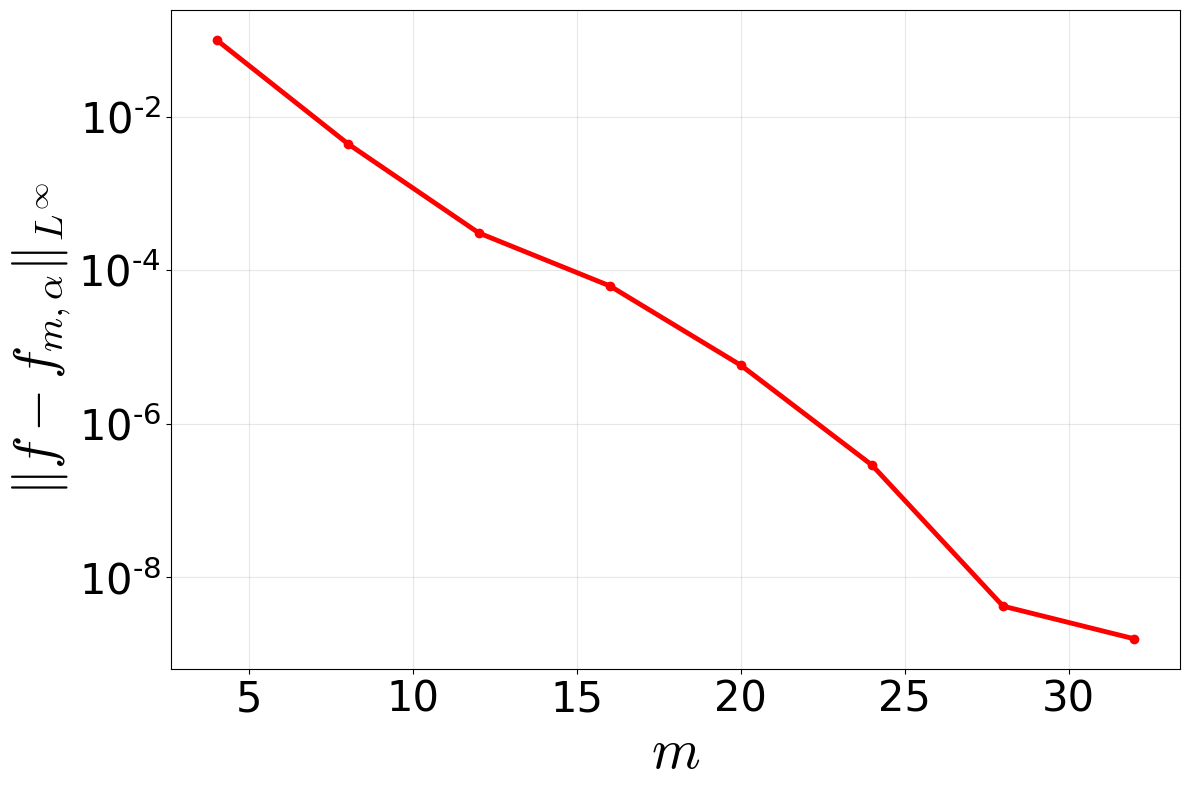}}
\hspace{1mm}
\subfloat[Asymmetric function: $f_2(x)$]{\label{fig:error_decay_f2}\includegraphics[width=0.32\textwidth]{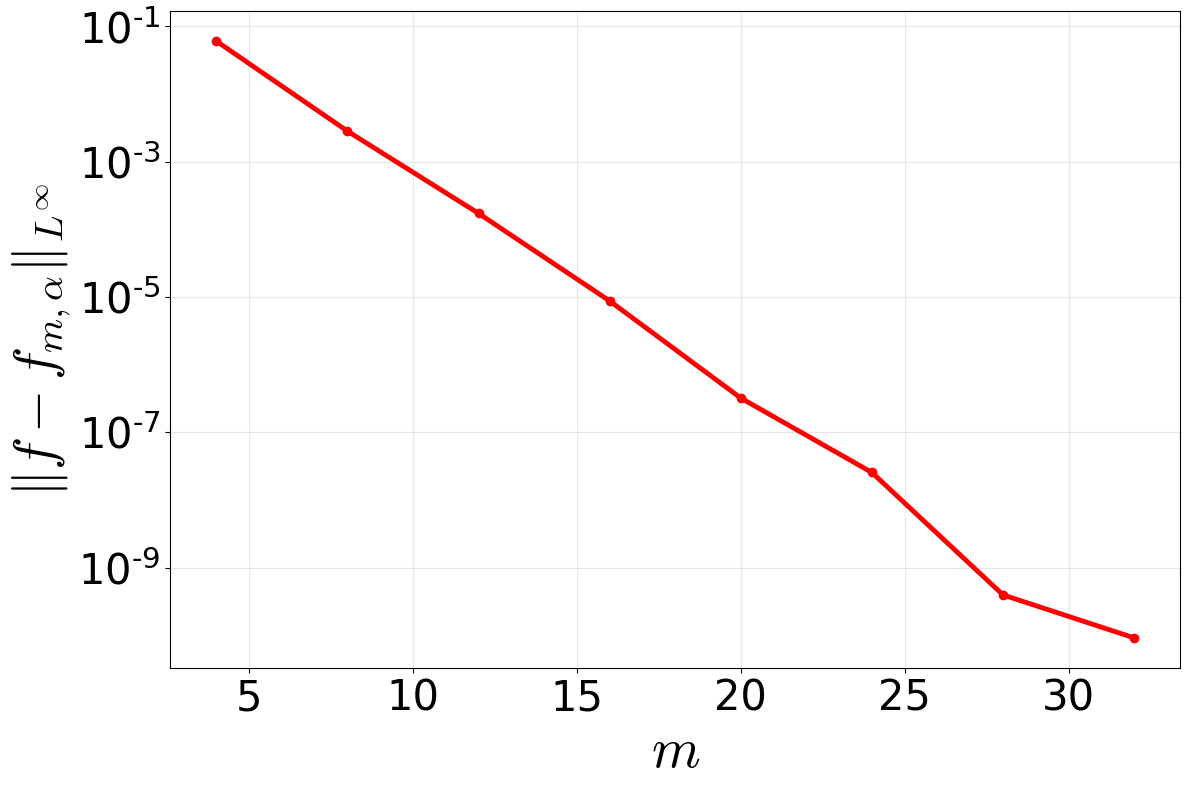}}
\hspace{1mm}
\subfloat[Two-jump function: $f_3(x)$]{\label{fig:error_decay_f3}\includegraphics[width=0.32\textwidth]{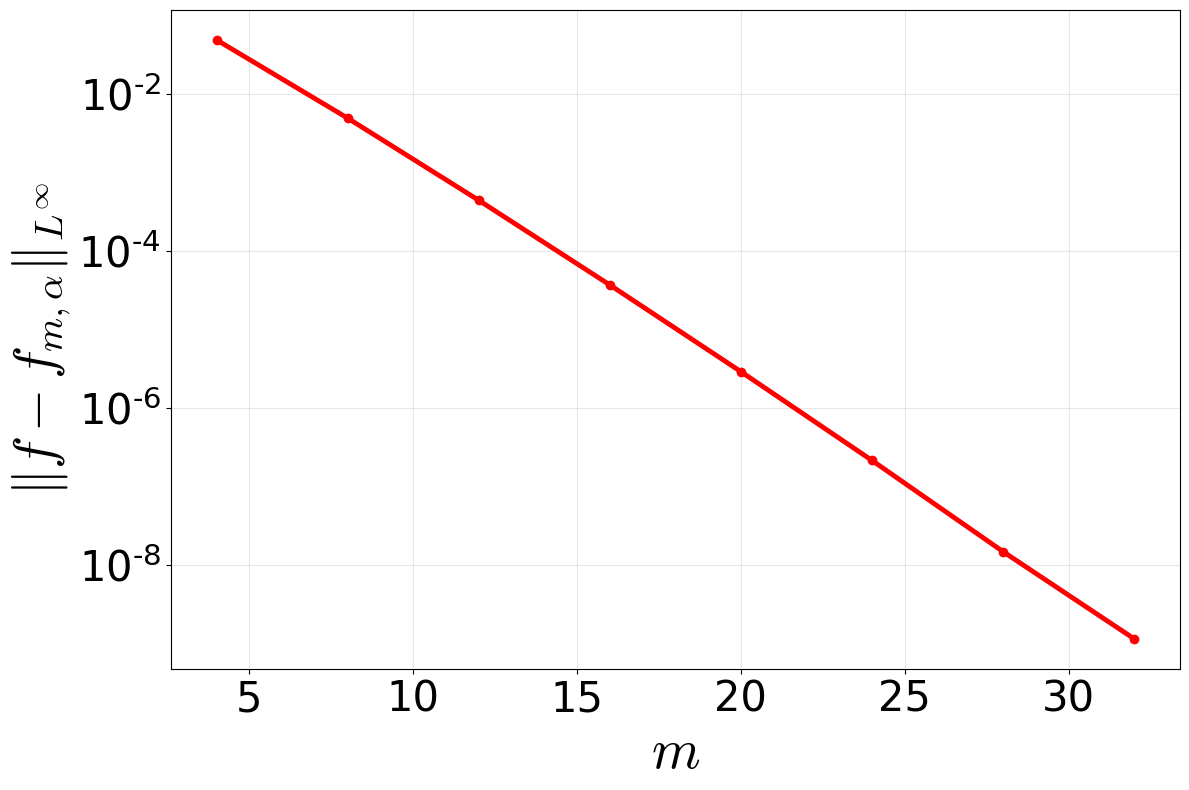}}
\hspace{1mm}
\subfloat[Piecewise tanh: $f_4(x)$]{\label{fig:error_decay_f4}\includegraphics[width=0.32\textwidth]{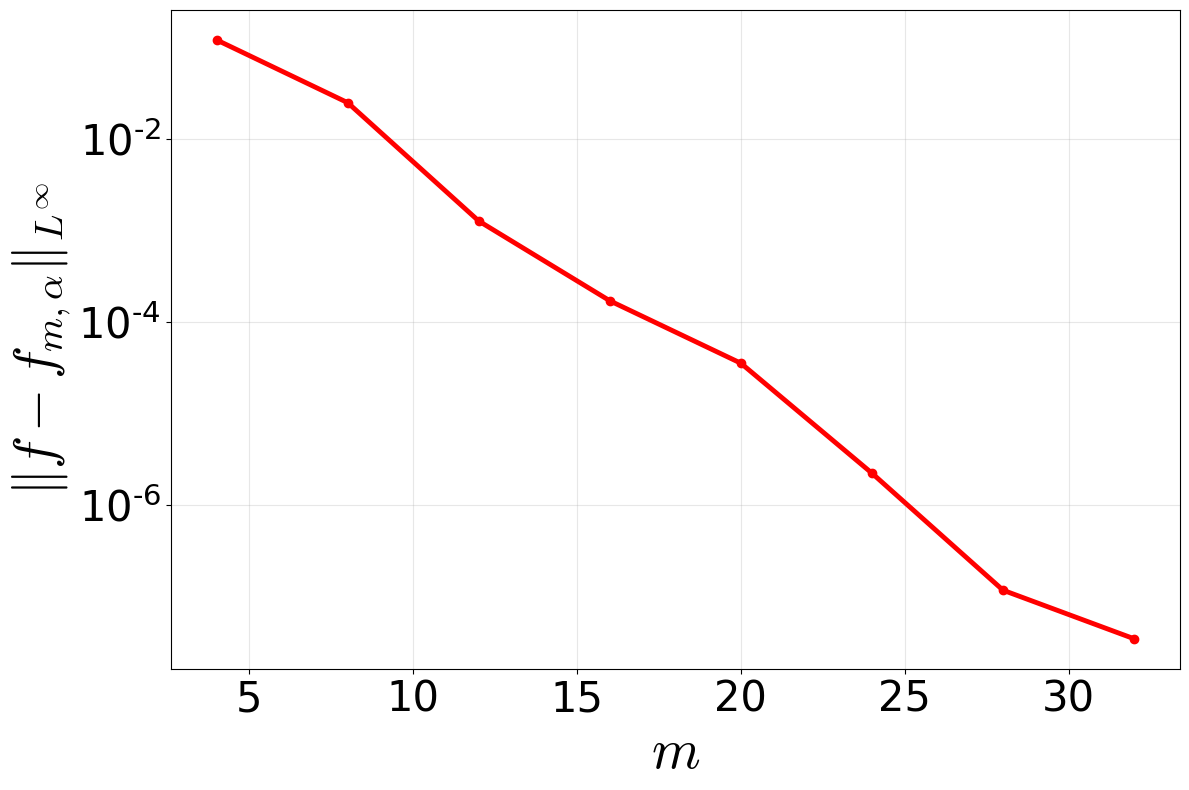}}
\hspace{1mm}
\subfloat[Two-jump tanh: $f_5(x)$]{\label{fig:error_decay_f5}\includegraphics[width=0.32\textwidth]{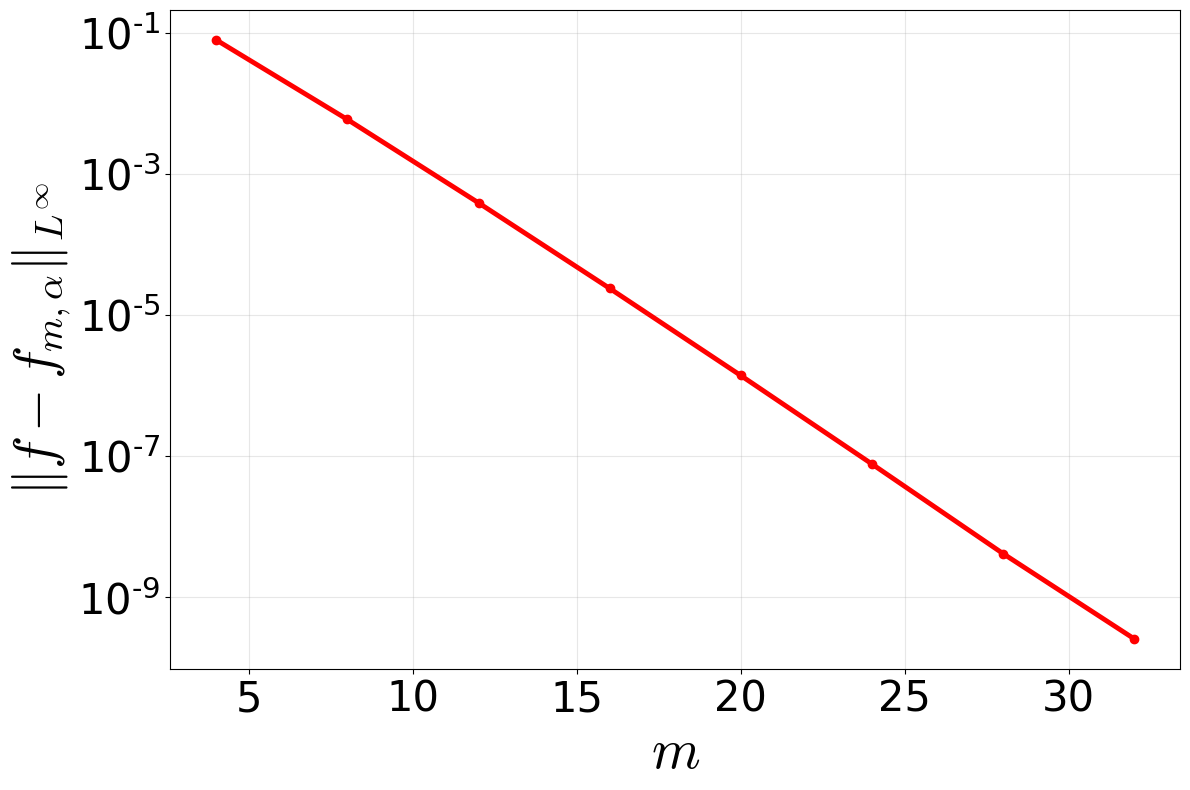}}
\hspace{1mm}
\subfloat[Three pieces: $f_6(x)$]{\label{fig:error_decay_f6}\includegraphics[width=0.32\textwidth]{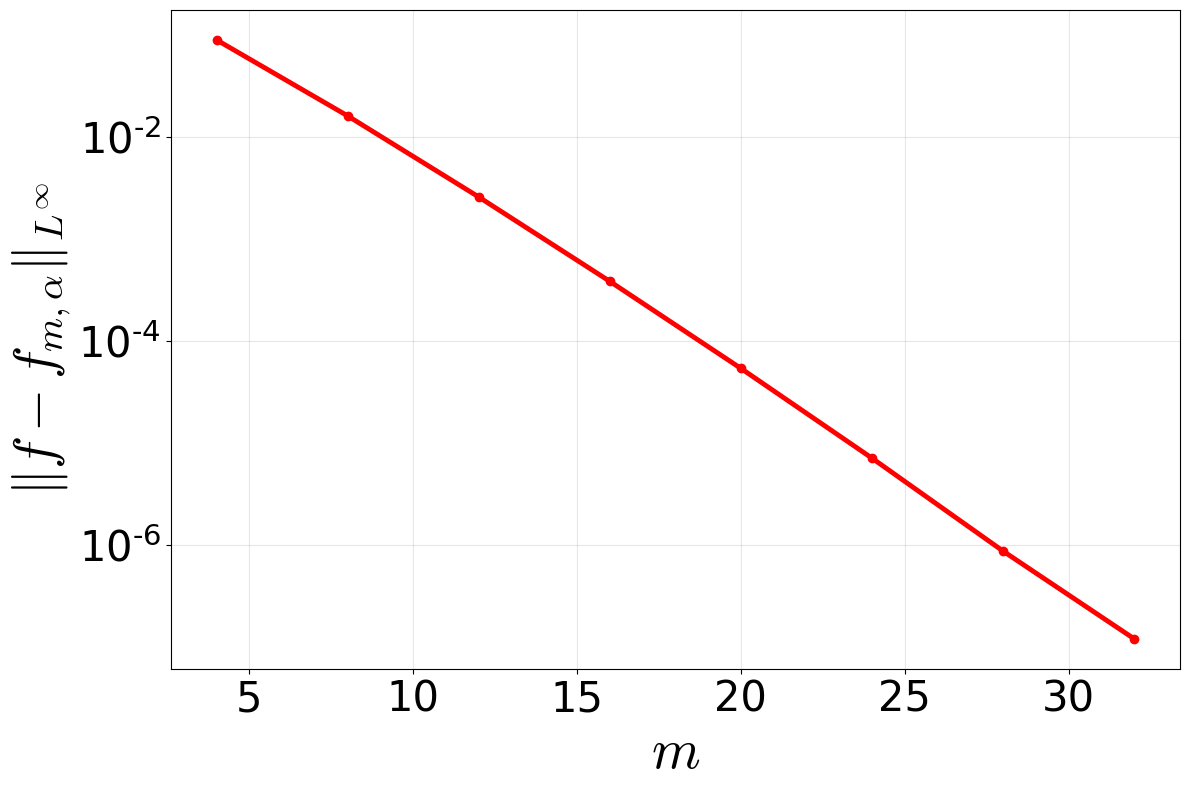}}
\caption{$L^{\infty}$ error decay of the fractional IPRM for six test functions with $\alpha=\pi/4$ and $\lambda=0.75$. The horizontal axis is the Gegenbauer truncation degree $m$. All functions exhibit exponential convergence, with rates determined by the size of the analyticity region $\rho$.}
\label{fig:error_decay}
\end{figure}

The parameter $\rho$ is determined \cite[(8.4)--(8.5)]{trefethen2019} by the nearest singularity of $f$ in the complex plane after mapping each subinterval to $[-1,1]$. Specifically, consider $f_1$ on the subinterval $[-1,0]$: the affine map $t=2x+1$ sends $[-1,0]$ to $[-1,1]$, and the nearest pole $x=i/5$ of $1/(1+25x^2)$ maps to $z_0=1+2i/5$. Then, the Bernstein ellipse parameter is
\[
\rho=|z_0+\sqrt{z_0^2-1}|\approx 1.92.
\]

For a piecewise function, $\rho$ is taken as the minimum over all subintervals, since the slowest-converging piece determines the overall rate. Table \ref{tab:error_decay} lists the $\rho$ values alongside the $L^{\infty}$ errors for each function at selected values of $m$. The error decreases by a nearly constant factor as $m$ increases by a fixed increment, confirming the exponential rate. For instance, the error of $f_5$ decreases by a factor of approximately $17$--$18$ per $\Delta m=4$ in the range $m=16$ to $m=28$, which is consistent with the theoretical prediction $\rho^4\approx 2.06^4\approx 18.0$. The function $f_2$ achieves an error below $10^{-10}$ at $m=32$ owing to its large $\rho\approx 2.15$, while $f_6$ reaches only $10^{-7}$ at the same $m$ due to its smaller $\rho\approx 1.62$.

\begin{table}[htbp]
\caption{Absolute $L^{\infty}$ errors of the fractional IPRM for varying $m$, with $\alpha=\pi/4$, $\lambda=0.75$, and $N=10m$. The Bernstein ellipse parameter $\rho$ 
(minimum over all subintervals) characterizes the theoretical convergence rate.}\label{tab:error_decay}
\centering
\setlength{\tabcolsep}{4mm}{
\renewcommand{\arraystretch}{1.2}
\begin{tabular}{ccccccc}
\toprule
$m$ & $f_1$ & $f_2$ & $f_3$ & $f_4$ & $f_5$ & $f_6$ \\
\midrule
 4  & 1.01e-1 & 6.08e-2 & 4.83e-2 & 1.22e-1 & 7.91e-2 & 8.87e-2 \\
 8  & 4.50e-3 & 2.83e-3 & 4.88e-3 & 2.51e-2 & 5.90e-3 & 1.60e-2 \\
12  & 3.08e-4 & 1.72e-4 & 4.40e-4 & 1.27e-3 & 3.89e-4 & 2.57e-3 \\
16  & 6.29e-5 & 8.75e-6 & 3.68e-5 & 1.71e-4 & 2.39e-5 & 3.83e-4 \\
20  & 5.80e-6 & 3.21e-7 & 2.90e-6 & 3.55e-5 & 1.39e-6 & 5.37e-5 \\
24  & 2.90e-7 & 2.55e-8 & 2.15e-7 & 2.23e-6 & 7.75e-8 & 7.10e-6 \\
28  & 4.21e-8 & 3.95e-10& 1.49e-8 & 1.17e-7 & 4.16e-9 & 8.73e-7 \\
32  & 1.58e-9 & 9.14e-11& 1.16e-9 & 3.45e-8 & 2.58e-10& 1.21e-7 \\
\midrule
$\rho$ & 1.92 & 2.15 & 1.87 & 1.78 & 2.06 & 1.62\\
\bottomrule
\end{tabular}}
\end{table}

Although the convergence rate depends on $\rho$ (and hence on the analyticity of $f$), the error estimate in Theorem \ref{thm:error} and the conditioning analysis in Theorem \ref{thm:condW} both predict that the reconstruction accuracy should be 
insensitive to the transform angle $\alpha$, since the underlying Gram matrix $\mathrm{Gr}$ is independent of $\alpha$ (Proposition~\ref{prop:chirp}). To verify 
this prediction, Figure \ref{fig:error_alpha} plots the relative deviation
\[
\frac{|e_\alpha(m)-\bar{e}(m)|}{\bar{e}(m)},
\]
where $e_\alpha(m)=\|f-f_{m,\alpha}\|_{L^{\infty}}$ is the $L^{\infty}$ reconstruction error at angle $\alpha$, and $\bar{e}(m)=\frac{1}{|\mathcal{A}|}
\sum_{\alpha\in\mathcal{A}}e_\alpha(m)$ is the mean error over the set $\mathcal{A}=\{\pi/16,\pi/8,3\pi/16,\pi/4,5\pi/16,3\pi/8,7\pi/16\}$ of seven tested angles. For all six functions, the relative deviation remains below $10^{-3}$ throughout the range of $m$, and for most functions stays at the level of $10^{-5}$--$10^{-6}$, 
confirming that the choice of $\alpha$ has negligible effect on the reconstruction accuracy.

\begin{figure}[htbp]
\centering
\subfloat[Piecewise function: $f_1(x)$]{\label{fig:error_alpha_f1}\includegraphics[width=0.32\textwidth]{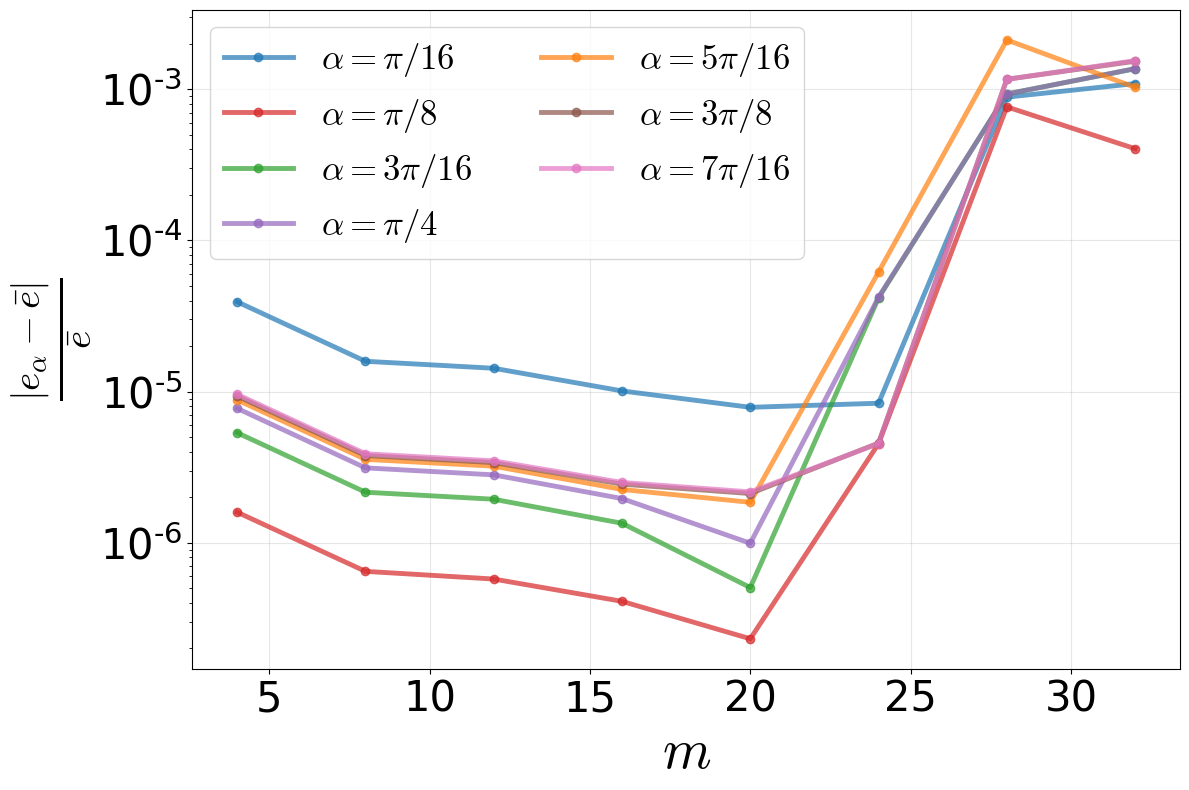}}
\hspace{1mm}
\subfloat[Asymmetric function: $f_2(x)$]{\label{fig:error_alpha_f2}\includegraphics[width=0.32\textwidth]{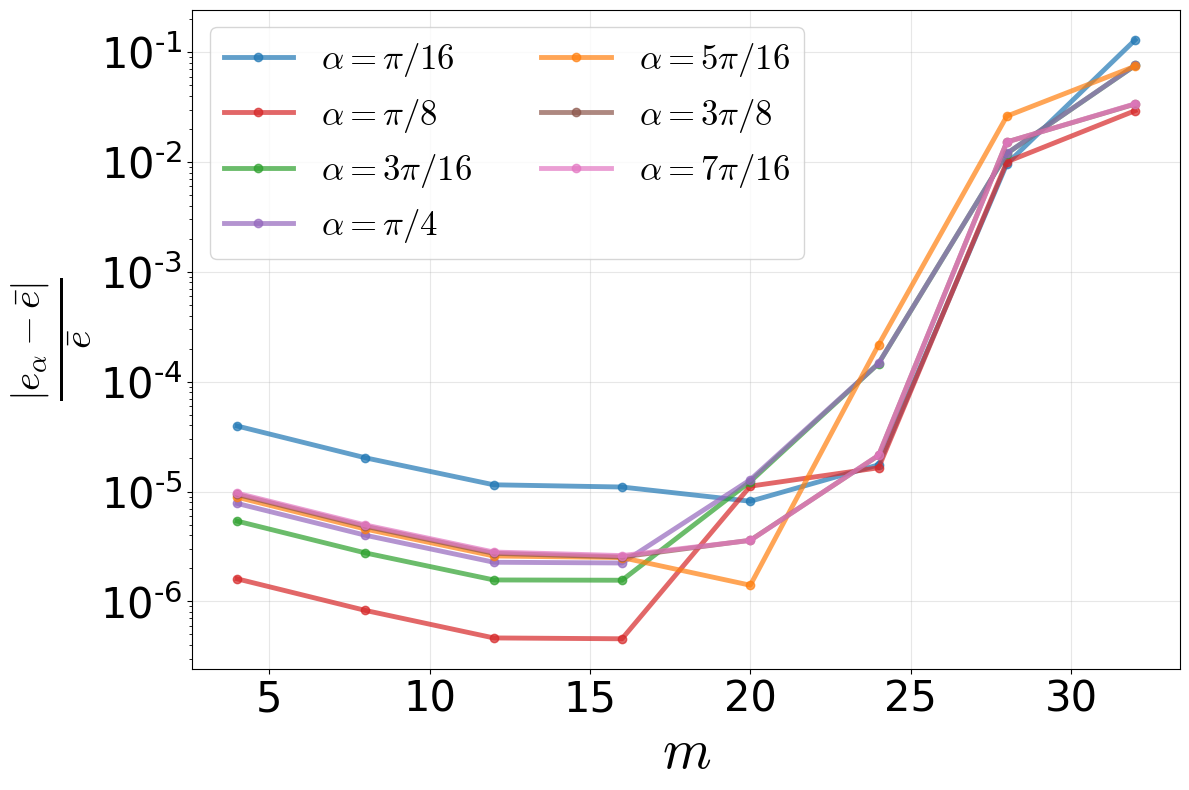}}
\hspace{1mm}
\subfloat[Two-jump function: $f_3(x)$]{\label{fig:error_alpha_f3}\includegraphics[width=0.32\textwidth]{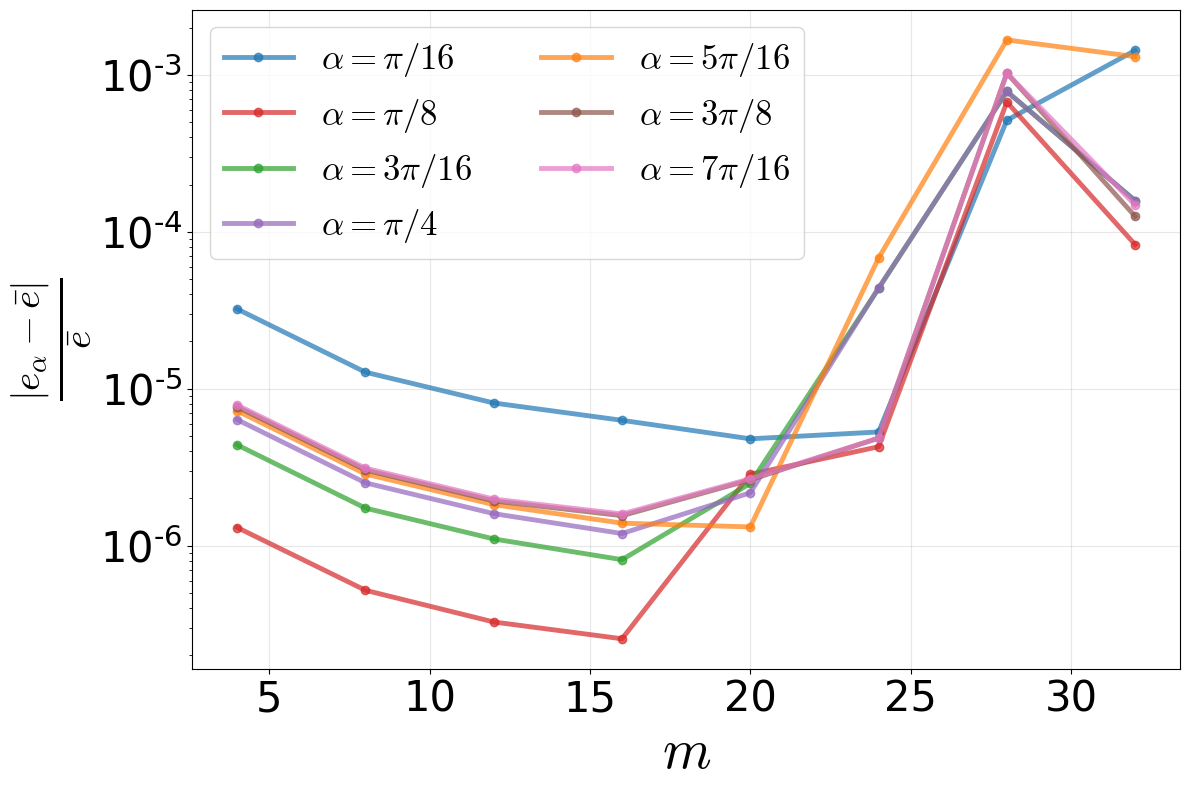}}
\hspace{1mm}
\subfloat[Piecewise tanh: $f_4(x)$]{\label{fig:error_alpha_f4}\includegraphics[width=0.32\textwidth]{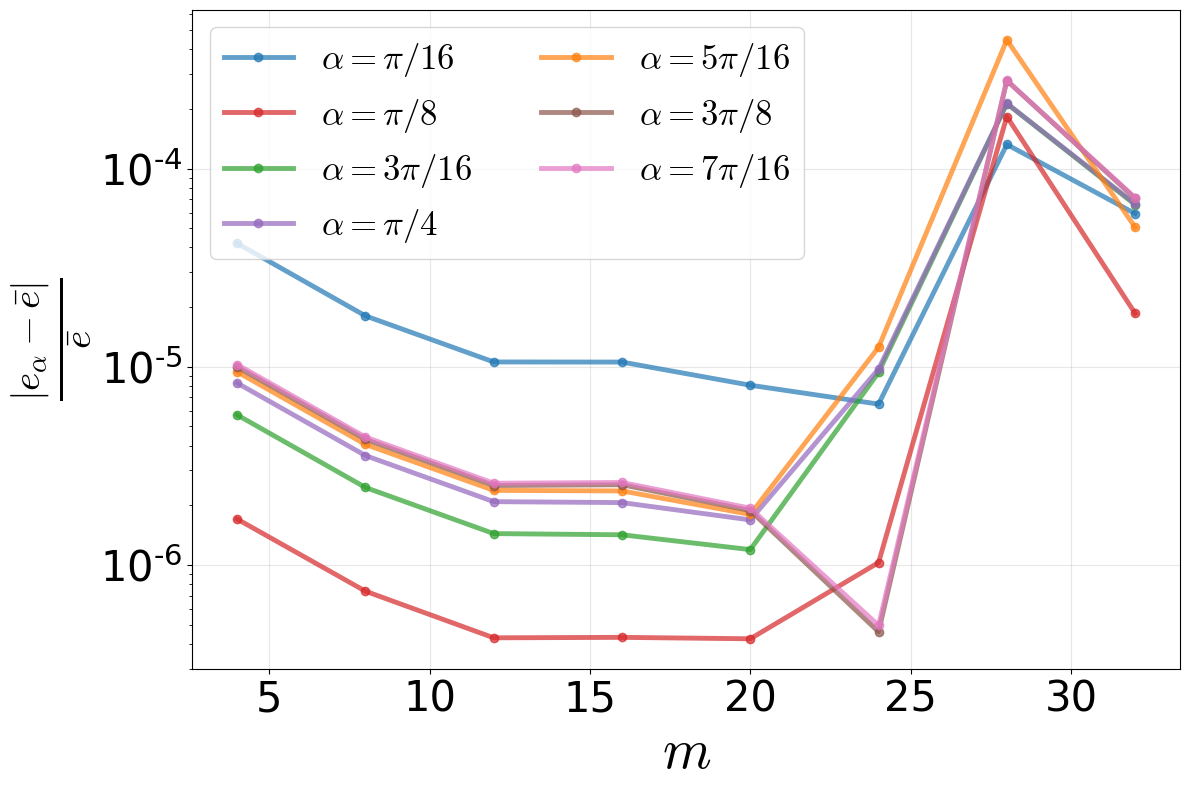}}
\hspace{1mm}
\subfloat[Two-jump tanh: $f_5(x)$]{\label{fig:error_alpha_f5}\includegraphics[width=0.32\textwidth]{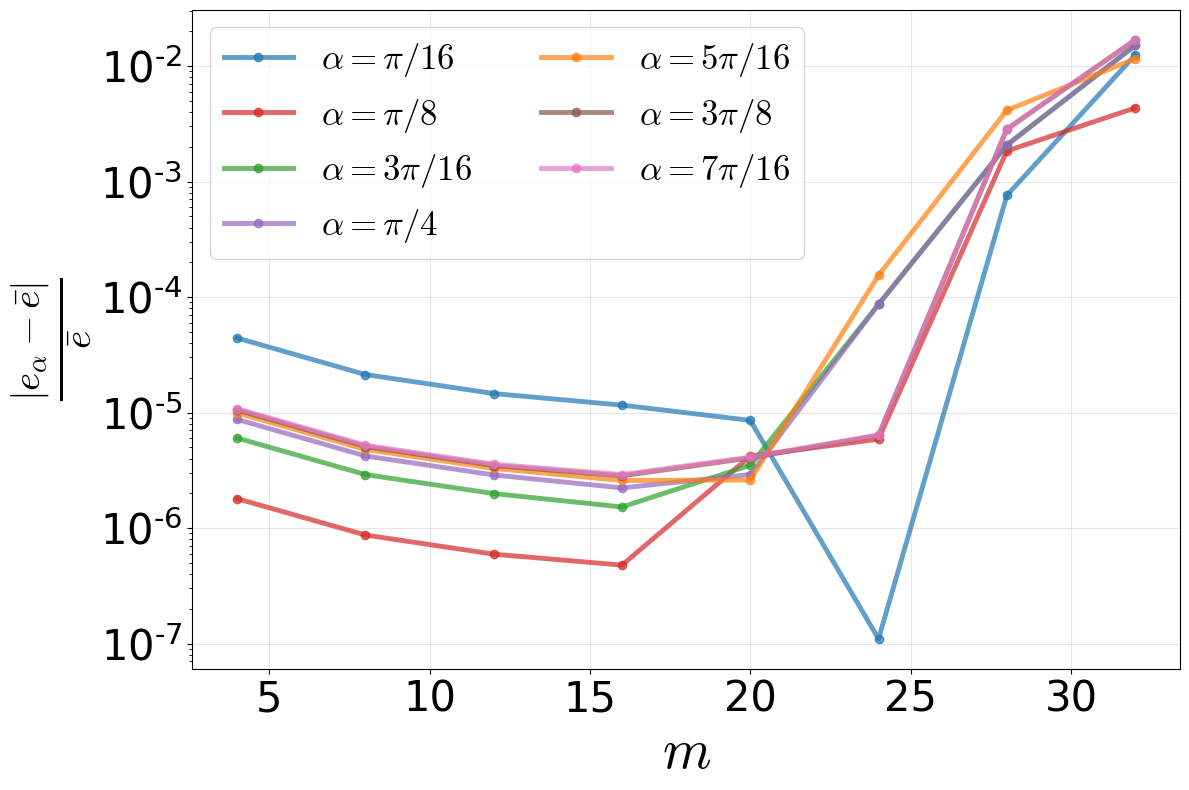}}
\hspace{1mm}
\subfloat[Three pieces: $f_6(x)$]{\label{fig:error_alpha_f6}\includegraphics[width=0.32\textwidth]{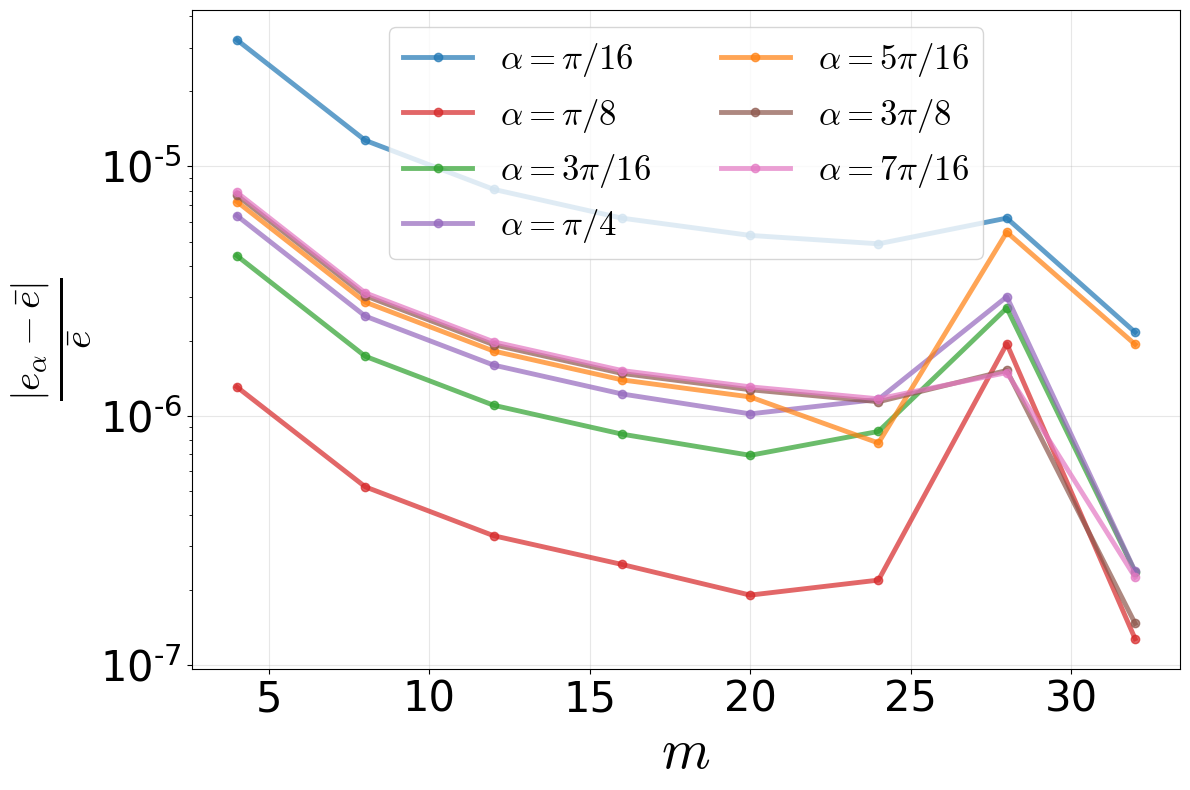}}
\caption{Insensitivity of the reconstruction error to the transform angle $\alpha$. The vertical axis shows the relative deviation $|e_\alpha(m)-\bar{e}(m)|/\bar{e}(m)$ of the $L^{\infty}$ error at each $\alpha$ from the mean error $\bar{e}(m)$ over seven angles from $\pi/16$ to $7\pi/16$, with $\lambda=0.75$ and $N=10m$.}
\label{fig:error_alpha}
\end{figure}

In summary, the numerical experiments in this section confirm the two main theoretical predictions of Sections \ref{subsec:solvability} and \ref{subsec:error}: the fractional IPRM achieves exponential error decay at a rate governed by the 
Bernstein ellipse parameter $\rho$ of the target function, and the reconstruction accuracy is independent of the transform angle $\alpha$. These properties, combined with the complete elimination of the Gibbs phenomenon demonstrated in Section \ref{subsec:reconstruction}, establish the fractional IPRM as a robust and accurate method for reconstructing piecewise analytic functions from fractional Fourier data.

\section{Conclusion}\label{sec:conclusion}

We have presented the first extension of the Inverse Polynomial Reconstruction Method to one-dimensional fractional Fourier series. The method reconstructs 
piecewise analytic functions by solving an overdetermined system that maps fractional Fourier coefficients to Gegenbauer expansion coefficients, bypassing the non-commutativity of projections that limits the direct Gegenbauer method. The conditioning analysis reveals that the coefficient matrix $W_\alpha$ is governed by an $\alpha$-independent Gram matrix $\mathrm{Gr}$, whose eigenvalue bounds identify $\lambda\approx 0.75$ as the optimal Gegenbauer parameter. An $L^{\infty}$ error estimate establishes exponential convergence for analytic functions at a rate determined by the Bernstein ellipse parameter $\rho$. Numerical experiments on 
six piecewise analytic test functions confirm that the method completely eliminates the Gibbs phenomenon, achieves errors three to five orders of magnitude smaller than both the fractional Fourier partial sum and the direct Gegenbauer method, and is insensitive to the transform angle $\alpha$.

In future work, we plan to extend the proposed method to multi-dimensional fractional Fourier series and to investigate stable reconstruction strategies in the presence of noisy fractional Fourier data.

\bibliographystyle{elsarticle-num}
\bibliography{ref_IPRM}

\end{document}